\sffamily\color{gray}\arabic*,
\sffamily\color{gray},
\def\parencite{\citep}
\def\textcite{\citet}
\def\Textcite{\Citet}
\newtheorem{theorem}{Theorem}[section]
\newtheorem{proposition}[theorem]{Proposition}
\newtheorem{lemma}[theorem]{Lemma}
\theoremstyle{remark}
\newtheorem{remark}[theorem]{Remark}
\pgfplotsset{%
    layers/standard/.define layer set={%
        background,axis background,axis grid,axis ticks,axis lines,axis tick labels,pre main,main,axis descriptions,axis foreground%
    }{
        grid style={/pgfplots/on layer=axis grid},%
        tick style={/pgfplots/on layer=axis ticks},%
        axis line style={/pgfplots/on layer=axis lines},%
        label style={/pgfplots/on layer=axis descriptions},%
        legend style={/pgfplots/on layer=axis descriptions},%
        title style={/pgfplots/on layer=axis descriptions},%
        colorbar style={/pgfplots/on layer=axis descriptions},%
        ticklabel style={/pgfplots/on layer=axis tick labels},%
        axis background@ style={/pgfplots/on layer=axis background},%
        3d box foreground style={/pgfplots/on layer=axis foreground},%
    },
}
\renewcommand{\epsilon}{\varepsilon}
\newcommand{\ud}{\mathrm{d}}
\renewcommand{\st}{\ \middle|\ }
\newcommand{\bx}{\square}
\newcommand{\width}{\operatorname{width}}
\newcommand{\dist}{\operatorname{dist}}
\newcommand{\md}{\operatorname{mid}}
\newcommand{\wprec}{\smash{u_\mathrm{prec}}}
\newcommand{\opnorm}[1]{{\left\vert\kern-0.25ex\left\vert\kern-0.25ex\left\vert \smash{#1}
    \right\vert\kern-0.25ex\right\vert\kern-0.25ex\right\vert}}
\title{Validated Numerics for Algebraic Path Tracking}
\author{Alexandre Guillemot}
\affiliation[obeypunctuation=true]{
  \institution{Inria, Université Paris-Saclay},
  \city{Palaiseau},
  \country{France}
}
\author{Pierre Lairez}
\affiliation[obeypunctuation=true]{
  \institution{Inria, Université Paris-Saclay},
  \city{Palaiseau},
  \country{France}
}
\keywords{numerical algebraic geometry, certified path tracking}
\begin{abstract}
  Using validated numerical methods, interval arithmetic and Taylor models, we propose a certified predictor-corrector loop for tracking zeros of polynomial systems with a parameter. We provide a Rust implementation which shows tremendous improvement over existing software for certified path tracking.
\end{abstract}
\begin{document}
\maketitle

\section{Introduction}

Path tracking, or homotopy continuation,
is the backbone of numerical algebraic geometry \parencite{SommeseVerscheldeWampler_2005}.
Given  a polynomial map~$(t, x) \mapsto F_t(x)$, from~$\mathbb{C}\times \mathbb{C}^n$ to~$\mathbb{C}^n$,
and a regular zero~$\zeta \in \mathbb{C}^n$ of~$F_0$,
we want to compute for~$t\in [0,1]$ the continuation~$\zeta_t$,
which is, assuming well-posedness, the unique continuous function of~$t$ such that~$F_t(\zeta_t) = 0$ and~$\zeta_0 = \zeta$.
Heuristic approaches to path tracking are sometimes enough
for solving polynomial systems, since we may have the possitiblity to certify zeros of the target system~$F_1$ \emph{a posteriori}.
However, for computing monodromy actions --\,which applies to irreducible decomposition \parencite{SommeseVerscheldeWampler_2001}
or Galois group computation \parencite{HauensteinRodriguezSottile_2017}\,--
and finer invariants --\,such as braids \parencite{RodriguezWang_2017}\,--
it is not sound to use heuristic continuation methods.

\subsubsection*{Contributions}
Building upon Moore's interval arithmetic criterion to isolate a zero of a polynomial system
\parencite{Krawczyk_1969,Moore_1977,Rump_1983},
and following ideas from
\textcite{VanderHoeven_2015}, also developed independently by~\textcite{DuffLee_2024},
we propose a new path tracking algorithm.
The originality of this result lies in the model of computation which rather than idealizing interval arithmetic accounts for what a real software library like MPFI provides.
There is no discrepancy between what is presented and what is implemented.
We prove correction and termination in this model.
In particular, we provide an algorithm (Section~\ref{sec:refin-moore-boxes}) to refine isolating boxes provided by Moore's criterion.
We use this algorithm to formulate the path tracking algorithm (Section~\ref{sec:path-tracking}).
The main difficulty is the balance of the working precision, which should be as low as possible for performance, but large enough to ensure appropriate convergence properties and termination.

The use of Taylor models enables predictors (Section~\ref{sec:predictors}), such as the tangent predictor, or the cubic Hermite predictor,
as observed by \textcite{VanderHoeven_2015},
leading to tremendous improvement over previous methods based on Smale's $\alpha$-theory.
We provide a Rust implementation that we compare with existing software for path tracking, both certified and noncertified (Section~\ref{sec:experiments}).
We find that the number of iterations performed using the Hermite predictor is in the same order of magnitude as noncertified approaches.

\subsubsection*{Related work}
The method of path tracking received vast attention
in the context of numerical multivariate polynomial system solving.
It is the method of choice for most state-of-the-art software:
PHCpack \parencite{Verschelde_1999}, Bertini \parencite{BatesHauensteinSommeseWampler_2013}
or HomotopyContinuation.jl \parencite{BreidingTimme_2018}, for example.
Despite their effectiveness (or perhaps \emph{because} of their effectiveness),
and despite recent work which brings robustness to unprecedented levels \parencite{TelenVanBarelVerschelde_2020},
this software does not guarantee correctness, that is the consistency of the result with the definition of the continuation.
We obtain zeros of the target system which we may certify independently, but we are not certain that the paths were tracked correctly, without swapping for example.

There are countless certified path tracking algorithms
based on Shub and Smale's $\alpha$-theory. For the most part, they were developed for complexity analyses
(for a review, see \textcite{Cucker_2021}) and their implementation is difficult.
\Textcite{BeltranLeykin_2012,BeltranLeykin_2013} took on the challenge within the Macaulay2 package for numerical algebraic geometry \parencite{Leykin_2011}.
In a specific case of “Newton homotopies” (where the system has the form~$F_t(x) = F_1(x) - (1-t) F_1(v)$ for some constant~$v\in \mathbb{C}^n$),
\textcite{HauensteinLiddell_2016,HauensteinHaywoodLiddell_2014} managed to incorporate a tangent predictor into the~$\alpha$-theory
and obtained significant improvement.
In the univariate case ($n=1$),
methods are much more diverse,
even though path tracking is not the method of choice for solving in this case.
We note in particular working implementations by \textcite{Kranich_2015,MarcoBuzunarizRodriguez_2016,XuBurrYap_2018}.

\section{Moore's criterion}

Let~$V$ be a finite dimensional linear space over~$\mathbb{R}$ with a norm~$\|-\|$.
We denote by $\opnorm{-}$ the associated operator norm on~$\mathrm{End}(V)$.
Let~$B$ denote the closed unit ball.
(We will later choose~$V = \mathbb{C}^n \simeq \mathbb{R}^{2n}$ and~$\|-\|$
will be the real $\infty$-norm, so~$B$ will be a box.)

\begin{theorem}%[\cite{Moore_1977,Rump_1983}]
  \label{thm:moore}
  Let~$f : V \to V$ be a continuously differentiable map and let~$\rho \in (0,1)$.
  Let~$x\in V$, $r > 0$, and let~$A: V\to V$ be a linear map.
  Assume that for any~$u, v\in rB$,
  \[  - A  f(x) + \big [ \operatorname{id}_V - A \circ \ud f(x + u) \big ] (v) \in  \rho r B. \]
  Then there is a unique~$\zeta \in x + rB$ such that~$f(\zeta) = 0$.
  Moreover,
  \begin{enumerate}[(i)]
    \item \label{item:rump:zetaloc} $\| x - \zeta \| \leq \rho r$;
  \end{enumerate}
  and for any~$y \in x + rB$,
  \begin{enumerate}[(i), resume]
    \item \label{item:rump:invertibility} $A$  and~$\ud f(y)$ are invertible;
    \item \label{item:rump:triple} $\opnorm{\ud f(y)^{-1} A^{-1}} \leq \left( 1-\rho \right)^{-1}$;
    \item \label{item:rump:triple2} $\opnorm{A} \leq (1+\rho) \opnorm{\ud f(y)^{-1}}$;
    \item \label{item:rump:dist} $(1-\rho) \|y-\zeta\| \leq \|A f(y)\| \leq (1+\rho) \|y - \zeta\|$.
  \end{enumerate}
\end{theorem}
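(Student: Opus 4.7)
The plan is to realise $\zeta$ as the unique fixed point of the Newton-like map $g\colon y \mapsto y - A f(y)$ on $x + rB$ via Banach's fixed-point theorem. The hypothesis is crafted to package exactly the two facts needed for this: the residual bound $\|A f(x)\| \le \rho r$ and the operator bound $\opnorm{\operatorname{id}_V - A \circ \ud f(y)} \le \rho$ for all $y \in x + rB$.

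I would begin by unpacking the hypothesis. Setting $v = 0$ immediately gives $-A f(x) \in \rho r B$. For general $u, v \in rB$, applying the hypothesis once to $v$ and once to $-v$ and averaging via the triangle inequality yields $\|[\operatorname{id}_V - A \circ \ud f(x+u)](v)\| \le \rho r$; since this holds for every $v$ with $\|v\| \le r$, one reads off $\opnorm{\operatorname{id}_V - A \circ \ud f(y)} \le \rho$ for every $y \in x + rB$. From this, a Neumann series gives invertibility of $A \circ \ud f(y)$ with $\opnorm{(A \circ \ud f(y))^{-1}} \le (1-\rho)^{-1}$ and $\opnorm{A \circ \ud f(y)} \le 1 + \rho$. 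In finite dimension, bijectivity of $A \circ \ud f(y)$ forces both factors to be individually bijective, proving (ii); item (iii) is the Neumann estimate after writing $(A \circ \ud f(y))^{-1} = \ud f(y)^{-1} \circ A^{-1}$; and (iv) follows from $A = (A \circ \ud f(y)) \circ \ud f(y)^{-1}$.

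For the fixed point itself, for $y = x+u \in x + rB$ I would expand via the fundamental theorem of calculus applied to $s \mapsto f(x+su)$:
\[
  g(y) - x = -A f(x) + \int_0^1 [\operatorname{id}_V - A \circ \ud f(x+su)](u)\,\ud s.
\]
Each integrand lies in $\rho r B$ directly by hypothesis (with parameter $su \in rB$ and argument $v = u$), so by convexity $g(x+rB) \subseteq x + \rho r B \subseteq x + rB$. An analogous integral expression along the segment from $y_2$ to $y_1$, combined with the operator-norm bound, yields $\|g(y_1) - g(y_2)\| \le \rho \|y_1 - y_2\|$. Banach's theorem then produces a unique fixed point $\zeta \in x + rB$ of $g$, and invertibility of $A$ upgrades $A f(\zeta) = 0$ into $f(\zeta) = 0$. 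Item (i) is immediate since $\zeta = g(\zeta) \in x + \rho r B$, and (v) falls out of the same template: $A f(y) = A f(y) - A f(\zeta) = (y-\zeta) - \int_0^1 [\operatorname{id}_V - A \circ \ud f(\zeta + s(y-\zeta))](y-\zeta)\,\ud s$, whose norm is squeezed between $(1\pm\rho)\|y-\zeta\|$ by the operator bound on the segment.

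The main obstacle is conceptual rather than computational: one must notice that the compact single-line hypothesis simultaneously encodes a small-residual statement (via $v = 0$) and an operator-norm statement (via the symmetry $v \leftrightarrow -v$). Once these are extracted, the rest is a textbook Kantorovich-style argument.
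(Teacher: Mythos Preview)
Your proposal is correct and follows essentially the same route as the paper: define $g(y)=y-Af(y)$, extract the operator bound $\opnorm{\operatorname{id}_V-A\circ\ud f(y)}\le\rho$ via the $v\leftrightarrow-v$ symmetry and the triangle inequality, use the Neumann series for (ii)--(iv), and combine the integral representation with Banach's fixed-point theorem for existence, uniqueness, (i) and (v). The only cosmetic difference is that the paper keeps $-Af(x)$ inside the integral and invokes the hypothesis on the whole integrand in one stroke, whereas you first isolate the $v=0$ case separately; both lead to the same estimate.
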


The operator mapping a compact convex set~$E$ containing~$0$ in its interior to~$K(E) = -A f(x) + \left[ \operatorname{id}_V - A\circ \ud f(x+E) \right](E)$,
has been introduced by \textcite{Krawczyk_1969}
to refine isolating boxes.
\Textcite{Moore_1977} showed that the inclusion~$K(E) \subseteq E$
implies the existence of a zero of~$f$ in~$x+E$, by a reduction to Brouwer's fixed-point theorem.
Then \textcite[Theorem~7.4]{Rump_1983} proved unicity of the root if~$K(E) \subseteq \smash{\mathring E}$ (which implies~$K(E) \subseteq \rho E$ for some~$\rho \in (0,1)$ by compactness of~$E$).\footnote{In fact Rump merely requires the strict inclusion~$K(E) \subset E$ but this is not enough to obtain unicity, as shown by a simple linear projection~$f(x, y) = (x, 0)$. \Textcite[Theorem~11]{AlefeldMayer_2000} provide a correct version with a slightly weaker premise than~$K(E) \subseteq \smash{\mathring E}$ but stronger than~$K(E) \subset E$.}
In the variant above,
we assume that~$K(E) \subseteq \rho E$ for some~$\rho \in (0, 1)$, and in addition
we assume that~$E$ is a centered ball with respect to some norm (which is always the case if~$E = -E$, in addition to~$E$ being compact, convex and a neighborhood of~$0$).
In exchange for this extra assumption
we can prove the theorem with Banach's fixed-point theorem, which is more elementary than Brouwer's,
work with the operator norm induced by the norm instead of the spectral radius,
and obtain the quantitative statements~(i)--(v) which will prove useful later.

\begin{proof}[Proof]
  %Without loss of generality, we assume~$x = 0$ and~$r = `1$.
  Consider the function~$g(y) = y - A \circ f(y)$.
  The assumption on~$f$ rewrites as
  \begin{equation}
    \label{eq:5}
    \|\pm A  f(x) + \ud g(y) (v)\| \leq \rho r,
    \quad\forall y\in x + rB, \forall v \in rB,
  \end{equation}
  where the~$\pm$ sign comes from changing~$v$ into~$-v$, using~$-B = B$.
  The triangle inequality then implies
  \begin{equation*}
    \label{eq:9}
    2 \| \ud g(y)(v) \| \leq \|\ud g(y) (v)  - A  f(x)\| + \|\ud g(y) (v) + A  f(x)\|,
  \end{equation*}
  which shows that~$\opnorm{\ud g(y)} \leq \rho$, so~$g$ is~$\rho$-Lipschitz continuous.
  Since~$\rho < 1$, $g$ is a contracting map.
  Moreover, it is well known \parencite[Thm. 18.2.1]{Lang_1997}
  that $\opnorm{\ud g(y)} \leq \rho < 1$ implies the invertibility of the operator~$\mathrm{id} - \ud g(y)$, that is~$ A\circ \ud f(y)$,
  which implies~\ref{item:rump:invertibility}.
  The bounds \ref{item:rump:triple} and~\ref{item:rump:triple2} are also easy consequences.
  %The bound \ref{item:rump:triple} is also an easy consequence.

  Let~$u \in E$ and let~$u_t = x + t u$, for~$t\in [0,1]$.
  By integrating the derivative, we compute
   \begin{equation*}\textstyle
     g(x+u) % &= g(x) + \int_0^1 \left[\ud g(x + u_t) (u) \right] \ud t \\
            = x + \int_0^1 \big(-Af(x) + \ud g(u_t) (u) \big) \ud t,
   \end{equation*}
   which shows, using~\eqref{eq:5}, that~$g(x+ rB) \subseteq x + \rho r B \subset x + rB$.
   By Banach's theorem, $g$ has a unique fixed point~$\zeta$ in~$x+ rB$.
   Since~$A$ is invertible, $\zeta$ is a zero of~$f$.
   Inequality~\ref{item:rump:zetaloc} follows from~$\zeta$ belonging to~$g(x+rB)\subseteq x+\rho r B$
   and inequalities~\ref{item:rump:dist} follow from the $\rho$-Lipschitz continuity of~$g$.
\end{proof}

\section{Data structures}

\subsection{Arithmetic circuits}

We represent polynomial functions~$\mathbb{C}^n \to \mathbb{C}^m$ as \emph{arithmetic circuits}, also known as \emph{straight-line programs}.
Briefly, an arithmetic circuit with input space~$\mathbb{C}^n$, is a directed acyclic graph, multiple edges allowed,
with four types of nodes:
\begin{enumerate*}
  \item input nodes, with no incoming egdes and labelled with an integer in~$\left\{ 1,\dotsc,n \right\}$;
  \item constant nodes, with no incoming edges and labelled with an element of~$\mathbb{C}$;
  \item addition nodes $+$, with exactly two incoming edges;
  \item multiplication nodes $\times$, with exactly two incoming edges.
\end{enumerate*}
We associate in the obvious way to each node~$\nu$ of a circuit
a polynomial function~$P_\nu : \mathbb{C}^n \to \mathbb{C}$ \parencite[see][for more details]{Burgisser_2000}.
To a tuple of~$m$ nodes of a circuit, we associate a polynomial function~$\mathbb{C}^n\to \mathbb{C}^m$.

This data structure is useful in that it represents not only a polynomial but also a scheme for evaluating it, over~$\mathbb{C}$ or more general objects, for example interval numbers.
Moreover we may use automatic differentiation (in forward or backward mode)
to transform a circuit to another which also computes the derivative of some nodes with respect to some of the input variables.

\subsection{Intervals}

Checking Moore's criterion
requires more than a point evaluation, but information on the image of a set by polynomial map.
Interval arithmetic provides an effective approach to this issue.
There are many ways to model and implement interval arithmetic.
In short, we choose a set~$\mathbb{F} \subset \mathbb{R}$ of representable numbers,
and we define~$\bx \mathbb{R}$ (read ``box $\mathbb{R}$'' or ``interval $\mathbb{R}$'') to be the set of all nonempty compact intervals of~$\mathbb{R}$ with end points in~$\mathbb{F}$.
Lastly, we assume effective binary operations $\boxplus$ and~$\boxtimes$ on~$\bx \mathbb{R}$ such that
for any~$I, J\in \bx \mathbb{R}$ and any~$x\in I$ and~$y\in J$,
$x+y \in I \boxplus J$ and~$xy \in I\boxtimes J$.
For example, we can choose~$\mathbb{F} = \mathbb{Q}$
and define
\begin{align*}
  [a,b] \boxplus [c, d] &= [a+c, b+d], \text{ and}\\
  [a,b] \boxtimes [c, d] &= [\min(ac, ad, bc, bd), \max(ac, ad, bc, bd)].
\end{align*}
These are the usual formula for interval arithmetic, but they are seldom used in this exact form because of the unbearable swell of the binary size of the interval endpoints they produce in any nontrivial computation.
In practice, $\mathbb{F}$ is often the set of finite IEEE-754 64-bits floating-points numbers
and the formula above are implemented with appropriate rounding of the interval endpoints. (And we usually extend~$\bx \mathbb{R}$ with unbounded intervals to cope with overflows.)
We may also choose~$\mathbb{F} = \left\{ a \smash{2^{b}} \st a, b\in \mathbb{Z} \right\}$, the set of dyadic numbers
and round the endpoints of the interval at a given relative precision, which may change in the course of the computation.
This models, for example, the behavior of multiple precision libraries such as MPFI \parencite{RevolRouillier_1999}
or Arb \parencite{Johansson_2017}.

We define~$\bx \mathbb{C}$ as pairs of elements of~$\bx \mathbb{R}$, representing real and imaginary parts, endowed with the obvious extensions of~$\boxplus$ and~$\boxtimes$.
We consider also vectors of boxes, denoted~$\bx \mathbb{C}^n$.

\subsection{Interval extensions, adaptive precision}

Let~$f$ be a function $\mathbb{C}^n \to \mathbb{C}^m$.
An \emph{interval extension} of~$f$ is a function $\bx f : \bx \mathbb{C}^n \to \bx \mathbb{C}^m$
such that for any~$X \in \bx \mathbb{C}^n$ and any~$x\in X$,
$f(x) \in \bx f(X)$.
If~$f$ is a polynomial function represented by a circuit with constants in~$\mathbb{F}$, then we obtain naturally
an extension of~$f$ by replacing any constant~$c$ in the circuit by the singleton interval~$[c,c]$ and evaluating the circuit using interval operations~$\boxplus$
and~$\boxtimes$.

The correctness of our algorithm does not depend on any hypothesis on interval arithmetic and extensions
other than the basic requirements on interval extensions.
As for termination, we need stronger hypotheses.
%For example defining $I\boxplus J = I\boxtimes J = (-\infty, +\infty)$
%is compliant with the requirements of interval arithmetic but is certainly useless.
As pointed out above, interval arithmetic is usually not implemented in exact arithmetic.
It is also clear that finitely many representable numbers, as provided by the IEEE-754 arithmetic model, will not be enough to express termination arguments based on topology and convergence.
So we introduce an adaptive precision model which can be implemented using any multiple precision interval library.
The operations in this model depend on a parameter $\wprec \in (0,1)$, the \emph{unit roundoff}, which can be raised or lowered at will.
We require that for any~$M \geq 1$ and any~$[a,b], [c, d] \in \bx \mathbb{R}$ included in~$[-M, M]$,
\begin{align}
  \label{eq:6} [a, b] \boxplus [c, d] &\subseteq [a + c - M\wprec, b + d + M\wprec], \text{ and} \\
  [a,b] \boxtimes [c, d] &\subseteq [\min(ac, ad, bc, bd) - M^2 \wprec, \notag\\
  & \hspace{4em} \max(ac, ad, bc, bd) + M^2 \wprec].
  \label{eq:7}
\end{align}
Interval arithmetic implemented with IEEE-754 arithmetic satisfies these contraints, with~$\wprec \sim 2^{-53}$, unless overflow occurs \parencite{MullerBrunieDeDinechinJeannerodJoldesEtAl_2018}.

If we have a circuit~$f: \mathbb{C}^n \to \mathbb{C}^m$,
its interval extension~$\bx f$ depends on the working precision~$\wprec$.
To formulate a useful property, we need some metrics.
Let~$\|-\|$ be the real $\infty$-norm on~$\mathbb{C}^n$ seen as~$\mathbb{R}^{2n}$, that is
\begin{equation}
  \label{eq:20}
  \| (z_1, \dotsc, z_n) \| = \max_{1\leq i \leq n} \max \left( |\operatorname{Re}(z_i)|, |\operatorname{Im}(z_i)| \right).
\end{equation}
For~$X, Y \in \bx \mathbb{C}^n$,
we define the \emph{width} (or \emph{diameter}) and the \emph{magnitude} $\|X\|_\square$ as
\[ \width(X) = \sup_{x, y \in X} \|x - y\|, \ \ \text{and } \|X\|_\square = \sup_{x\in X} \|x\|, \]
and we define the Hausdorff distance by
\[ \dist(X, Y) = \max \big\{
  \mathop{\mathrm{sup}}\limits_{x\in X}
  \mathop{\mathrm{\vphantom{p}inf}}\limits_{y\in Y}
  \|x-y\|,
  \mathop{\mathrm{sup}}\limits_{y\in Y}
  \mathop{\mathrm{\vphantom{p}inf}}\limits_{x\in X}
  \|x-y\|\big\}. \]
Lastly, $\md(X)$ denotes the \emph{midpoint} of~$X$, which is some representable element of~$X$ (and we usually choose one that is as close as possible to the mathematical center).

\begin{proposition}\label{prop:lipschitz-continuity}
  Let~$f : \mathbb{C}^n\to \mathbb{C}^m$ be a circuit with representable constants
  and let~$\bx f$ be its natural interval extension in the adaptive precision model.
  For any compact~$K \subseteq \mathbb{C}^n$,
  there is some $L \geq 0$, independent of the unit roundoff, such that for any~$X, Y \in \bx \mathbb{C}^n$
  included in~$K$:
  \begin{enumerate}[(i)]
    \item\label{item:lipcont:width} $\width ( \bx f(X)) \leq L \left( \width(X) + \wprec \right)$,
    \item\label{item:lipcont:dist} $\dist \left( \bx f(X), \bx f(Y) \right) \leq L \left( \dist(X, Y) + \wprec \right)$.
  \end{enumerate}
\end{proposition}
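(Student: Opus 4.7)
The plan is to proceed by structural induction on the nodes of the arithmetic circuit representing $f$, processed in topological order. For each node $\nu$ I would exhibit three constants $M_\nu, L_\nu, L'_\nu$, depending only on the circuit and on $K$ but independent of the unit roundoff, such that for every $\wprec \in (0, 1)$ and every $X, Y \in \bx \mathbb{C}^n$ included in $K$,
\begin{align*}
\|\bx P_\nu(X)\|_\bx &\leq M_\nu, \\
\width(\bx P_\nu(X)) &\leq L_\nu(\width(X) + \wprec), \\
\dist(\bx P_\nu(X), \bx P_\nu(Y)) &\leq L'_\nu(\dist(X, Y) + \wprec).
\end{align*}
Specialising to the $m$ output nodes then yields the proposition. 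The magnitude bound is a crucial auxiliary invariant: without it, the rounding slacks $M\wprec$ and $M^2\wprec$ appearing in \eqref{eq:6}--\eqref{eq:7} could not be absorbed into uniform constants.

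The base cases are immediate. An input node is a coordinate projection, which preserves both width and Hausdorff distance and admits a magnitude bound depending only on $K$; a constant node $c$ returns the singleton $[c,c]$ with zero width and zero distance. For the inductive step I would first reduce complex arithmetic to a bounded number of real operations --- a single complex multiplication decomposes as four real multiplications and two real additions/subtractions via $(A_R + i A_I)(B_R + i B_I) = (A_R B_R - A_I B_I) + i(A_R B_I + A_I B_R)$ --- and then handle real addition and real multiplication separately. For a real addition node, bound \eqref{eq:6} sandwiches the output between the exact sum and its $M\wprec$-enlargement, so width and Hausdorff distance behave additively up to a $2M\wprec$ correction that is absorbed using the inductive hypothesis. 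For a real multiplication node, the algebraic identity $a'c' - b'd' = (a' - b')c' + b'(c' - d')$ (and its symmetric variants) shows that on intervals included in $[-M, M]$, both the width of the exact product and its perturbation under Hausdorff changes in the factors are bounded by $M$ times the corresponding quantities for $I$ and $J$; adding the $2M^2\wprec$ slack from \eqref{eq:7} closes the induction.

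The main obstacle, and the actual content of the proposition, is that the constants must remain independent of $\wprec$. This works because we restrict $\wprec$ to $(0, 1)$ once and for all: every quantity defined inductively from $M_{\nu_1}, M_{\nu_2}$ by a sum, a product, and an $M\wprec$ or $M^2\wprec$ correction stays bounded by its supremum over $\wprec \in (0, 1)$, which is a finite number depending only on $K$ and the circuit. Taking $L$ to be the maximum of the $L_\nu$ and $L'_\nu$ over the $m$ output nodes then gives the Lipschitz constant of the proposition.
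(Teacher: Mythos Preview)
Your proposal is correct and is essentially an explicit unfolding of the paper's two-line argument (``stable under composition, so check it for $\boxplus$ and $\boxtimes$''): structural induction on the circuit is precisely what ``stability under composition'' means here. The one substantive addition you make is tracking the auxiliary magnitude bound $M_\nu$, which the paper leaves implicit in the compactness hypothesis but which is indeed needed for the inductive step to go through (since the rounding slacks in \eqref{eq:6}--\eqref{eq:7} and the Lipschitz constant of multiplication both depend on the magnitude of the operands).
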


\begin{proof}
  These two properties are stable under composition, so it is enough to check them for~$\boxplus$ and~$\boxtimes$,
  and this follows directly from~\eqref{eq:6} and~\eqref{eq:7}.
\end{proof}

\subsection{Moore boxes}

Let~$B \subseteq \mathbb{C}^n$ be the closed unit ball for the real~$\infty$ norm~$\|-\|$.
Given a polynomial map~$f : \mathbb{C}^n \to \mathbb{C}^n$ and some~$\rho \in (0,1)$,
a \emph{$\rho$-Moore box for~$f$} is a triple~$(x, r, A) \in \mathbb{C}^n \times \mathbb{R}_> \times \mathbb{C}^{n\times n}$ such that
\begin{equation*}
  \label{eq:11}
  \left\|\, - r^{-1} A f(x) + (I_n - A \cdot \ud f(x + rB)) \cdot B \,\right\| \leq \rho.
\end{equation*}
We say that~$\rho$ is the \emph{contraction factor}.
By Theorem~\ref{thm:moore}, if~$(x, r, A)$ is a Moore box, then there is a unique zero of~$f$ in the set~$x + rB$, called the \emph{associated zero}.
We use Moore boxes (with representable~$x$, $r$ and~$A$) as a data structure to represent a regular zero of~$f$.

Given interval extensions~$\bx f$ and~$\bx \ud f$,
if it holds in interval arithmetic that
\[ \left\| -r^{-1} A \cdot \bx f(x) + \left( I_n - A \cdot \bx \ud f(x + r B) \right) \cdot B  \right\|_\square \leq \rho, \]
then~$(x, A, r)$ is a $\rho$-Moore box for~$f$.
Note that the magnitude is always a representable number, so we can indeed check this inequality accurately.
This leads to Algorithm~\ref{algo:M}.

\begin{algorithm}[t]
  \raggedright
  \caption{Interval certification of a Moore box}
  \begin{description}
    \item[input]
          $\bx f : \bx \mathbb{C}^n \to \bx \mathbb{C}^n$;
          $\bx \ud f : \bx \mathbb{C}^n \to \bx \mathbb{C}^{n\times n}$;\\
          $x \in \mathbb{C}^n$;
          $r \in \mathbb{R}_>$;
          $A \in \mathbb{C}^{n\times n}$;
          $\rho \in (0,1)$

    \item[output] a boolean
  \end{description}
  \begin{pseudo}
    def \fn{M}(\bx f, \bx \ud f, x, r, A, \rho):\\+
      $K \gets -  r^{-1} A \cdot \bx f(x) + \left( I_n - A \cdot \bx\ud f(x + r B) \right)\cdot B$\\
      %$e \gets r^{-1} A \cdot \bx f(x)$\\
      % \ct{all computations in interval arithmetic}\\
      %$\Delta \gets I_n - A \cdot \bx\ud f(x + r B)$ \quad\ct{$I_n \equiv$ identity $n \times n$ matrix}\\
      % return $\| -e + \Delta \cdot B\| \leq \rho$\\-
      return $\|K\|_\square \leq \rho$
  \end{pseudo}
  \label{algo:M}
\end{algorithm}

\begin{lemma}[Correctness of Algorithm~\ref{algo:M}]\label{lem:moore-interval}
  Let~$f : \mathbb{C}^n\to \mathbb{C}^n$ be a polynomial function,
  and let~$\bx f : \bx \mathbb{C}^n \to \bx \mathbb{C}^n$ and~$\bx \ud f : \bx \mathbb{C}^n \to \bx\mathbb{C}^{n\times n}$ be interval extensions of~$f$ and~$\ud f$ respectively.
  For any $x\in \mathbb{C}^n$, $A\in \mathbb{C}^{n\times n}$, $r > 0$ and~$\rho \in (0,1)$,
  if~\fn{M}(\bx f, \bx \ud f, x, r, A, \rho) returns \emph{True}, then~$(x, r, A)$ is a~$\rho$-Moore box for~$f$.
\end{lemma}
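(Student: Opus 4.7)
The plan is to reduce the lemma to the defining inclusion property of interval arithmetic: I will show that the interval $K$ computed by Algorithm~\ref{algo:M} contains, as a set in $\mathbb{C}^n$, every vector of the form
\[
  -r^{-1} A f(x) + \bigl(I_n - A\cdot \ud f(x+u)\bigr)(v),\quad u, v \in rB,
\]
so that the hypothesis $\|K\|_\square \le \rho$ immediately gives the Moore-box inequality for $(x,r,A)$.

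First I would observe that $B$ has representable endpoints $\pm 1$, hence lies in $\bx \mathbb{C}^n$, and similarly $x + rB \in \bx\mathbb{C}^n$ since $x$ and $r$ are representable. Then I would fix arbitrary $u, v \in rB$ and track where each sub-expression lives. By the interval-extension property~\eqref{eq:2}, $f(x) \in \bx f(x)$ (viewing $x$ as a degenerate box) and $\ud f(x+u) \in \bx \ud f(x+rB)$. The matrix/vector products and sums in the expression defining $K$ are built from $\boxplus$ and $\boxtimes$ applied componentwise, and the inclusion property~\eqref{eq:3} propagates through each such operation. By a straightforward structural induction on the interval expression in line~2 of Algorithm~\ref{algo:M}, the point $-r^{-1}Af(x) + (I_n - A\ud f(x+u))(v)$ belongs to $K$.

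Once that inclusion is established, the rest is immediate: the definition of $\|{-}\|_\square$ gives $\|y\| \le \|K\|_\square \le \rho$ for every $y \in K$, and taking the supremum over $u, v \in rB$ yields
\[
  \bigl\|\,-r^{-1} A f(x) + (I_n - A\cdot \ud f(x+rB))\cdot B\,\bigr\| \le \rho,
\]
which is exactly the $\rho$-Moore-box condition.

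I do not expect a serious obstacle: the only thing that needs care is the bookkeeping in the propagation step, namely that interval addition, scalar multiplication by the constant $r^{-1}$, matrix–vector multiplication by the fixed $A$, and the evaluation of $\bx\ud f$ on the box $x + rB$ all preserve set inclusion of the true arithmetic values. This is the content of~\eqref{eq:2} and~\eqref{eq:3}; no use is made of the stronger adaptive-precision bounds~\eqref{eq:6}--\eqref{eq:7}, which is consistent with the paper's remark that correctness (as opposed to termination) relies only on the minimal interval-arithmetic axioms.
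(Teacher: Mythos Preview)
Your proposal is correct and follows the same line as the paper, which does not spell out a formal proof but simply observes, in the paragraph preceding the lemma, that the interval computation of~$K$ contains every exact value $-r^{-1}Af(x)+(I_n-A\,\ud f(x+u))(v)$ by the inclusion properties~\eqref{eq:2} and~\eqref{eq:3}, so $\|K\|_\square\le\rho$ yields the Moore-box inequality. Your write-up merely makes this explicit, with the same structural induction on the interval expression.
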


\section{Refinement of Moore boxes}
\label{sec:refin-moore-boxes}

Following the original idea of \textcite{Krawczyk_1969}, who introduced his operator to refine isolating boxes,
we can refine a Moore box~$(x, r, A)$ for a polynomial map~$f:\mathbb{C}^n\to \mathbb{C}^n$ by computing the intersection
\[ (x + rB) \cap \left( x - A \cdot \bx f (x) + (I_n - A \cdot \bx \ud f(x + rB)) \right). \]
Unfortunately, this cannot always work, because the interval arithmetic may be too gross.
It is possible that~$(x, r, A)$ is a Moore box, but the intersection above is just~$x + rB$, providing no useful refinement.
In other words, it is possible that~$(x, r, A)$ is a $\rho$-Moore box
but \fn{M}(\bx f, \bx \ud f, x, r, A, \rho) returns \emph{False}.

\subsection{Algorithm}

%Algorithm~\ref{algo:refine} aims at refining Moore box.
We are given (a circuit representing) a polynomial map~$f : \mathbb{C}^n \to \mathbb{C}^n$,
a $\rho$-Moore box~$(x, r, A)$ with an associated zero~$\zeta$ of~$f$,
and some~$\tau \in (0,1)$.
We want to compute another Moore box with same associated zero and contraction factor~$\tau$.

% Imagine first that~$x$ is an exact zero of~$f$ and that~$A = \ud f(x)^{-1}$.
% then the condition for~$(x, r, A)$ to be a $\rho$-Moore box is
% \[ \left\| (I_n - A \cdot \ud f(x + rB)) \cdot B \right\| \leq \rho. \]
% Since~$I_n - A \cdot \ud f(x + ru) = O(r)$ as~$r\to 0$, for any~$u \in B$,
% we see that~$(x, r, A)$ is a Moore box for any~$r$ small enough.
% When~$x$ is not an exact zero, we may bring it closer to its associated by the quasi-Newton iteration
% $g : x \mapsto x - A\cdot f(x)$.
% It is a contracting map and the iterates~$g(x)$, $g(g(x))$, etc., converges to~$\zeta$.

Algorithm~\ref{algo:refine} proceeds as follows.
The main loop (Line~\ref{line:refine:mainloop})
maintains a triple~$(y, s, U)$
and stops when
the interval arithmetic criterion certifies that~$(y, s, U)$ is
a $\tau$-Moore box.
After the first iteration, the matrix~$U$ is always~$\ud f(y)^{-1}$, give or take some roundoff errors.
The computation of~$U$ need not be performed in interval arithmetic (this is the practical appeal of Krawczyk's operator), the correctness of the algorithm does not depend on the accuracy of this computation, but to ensure termination, the distance from~$U$ to~$\ud f(y)^{-1}$ must go to zero
as the working precision increases (that is~$\wprec \to 0$).
After the main loop, we know that~$(y, s, U)$ is a $\tau$-Moore box.
Before returning it, we check if, by chance, $(y, 2s, U)$ is also a $\tau$-Moore box.
We double the radius until it is not the case (or the radius exceeds~1) and then return the Moore box.

When Moore's criterion cannot be checked,
we try to improve the triple~$(y, s, U)$, either by using a quasi-Newton iteration $y \to y - A \cdot f(y)$, or by shrinking the box with $s\to \frac12 s$.
The choice depends on~$\| A \cdot f(y) \|$, the size of the quasi-Newton step,
compared to~$\tau s$, with the goal of balancing the two terms in Moore's criterion.

We want to run the computation with standard double precision as much as possible,
but the algorithm may warn that the working precision is not large enough, or equivalently, $\wprec$ is not small enough (Lines~\ref{line:refine:uprec1} and~\ref{line:refine:uprec2}).
The computational model assumes that we can increase the working precision, but on the practical side, if only double precision is available (this is the case in the implementation we propose), we abort the computation on a precision warning. In this view, it is important to avoid undue warnings.

In Algorithm~\ref{algo:refine}, two checks may trigger a precision warning.
First, after shrinking the box, when~$\frac{s}{\tau r}$ drops below some threshold.
This is because we expect~$\tau r$ to be the radius of the~$\tau$-Moore box that we are looking for
(simply by considering a degree~2 approximation of~$f$ around~$x$).
So when~$\frac{s}{\tau r}$ is too small we may suspect that Moore's criterion failed because of roundoff errors.
In this case, we want to increase working precision in such a way that~$\wprec$ goes to~0 faster than~$s$.
Second, when performing a quasi-Newton iteration, we check that the roundoff error, that is~$\width(y - \delta)$, is significantly smaller than the size of the quasi-Newton step. This ensures the convergence of~$y$ (Lemma~\ref{lem:convergence-y}).

\begin{remark}
  Algorithm~\ref{algo:refine} features some arbitrary constants
  for which we picked explicit values. Let us name them:
  $\rho = \smash{\frac 78}$, the contraction factor of the input;
  $\alpha = \smash{\frac{1}{64}}$, used to compare~$\|\delta\|_\square$;
  $\lambda = \smash{\frac12}$, used to shrink~$s$;
  $\beta = \smash{\frac{1}{40}}$, used in the precision check.

  Naturally, there is some flexibility in the choice of these constants.
  To keep the algorithm correct, we need to have~$\lambda < 1$ obviously.
  Values closer to~1 will produce bigger boxes with more iterations.
  The smaller~$\alpha$, the closer~$y$ is to the exact root.
  The quasi-Newton iteration converges rapidly, so there is little performance penalty in lowering~$\alpha$,
  but it may cause the precision check to fail earlier.
  For correctness, we need~$\alpha + \rho < 1$.
  (cf. proofs of Lemmas~\ref{lem:track:y} and~\ref{lem:track}).
  The parameter~$\rho$ can be close to~1, it improves performance to allow for Moore boxes with a larger contraction factor. We do not have to worry about the speed of convergence of the quasi-Newton iteration, because interval arithmetic is typically pessimistic. So if we can check a Moore box with contraction factor~$\frac 78$, then it probably has a much lower actual contraction factor. However, other constants degrade if~$\rho$ is too close to~1.
  For correctness, we need
  $\smash{\frac{\rho + \beta}{1-\beta}} < 1$.
  This number is the geometric ratio in Lemma~\ref{lem:convergence-y}.
\end{remark}

\begin{remark}
  Checking Moore's criterion is costly. From the practical point of view, with the path tracking algorithm in mind, it is beneficial for performance to perform first one or two quasi-Newton iterations (with the appropriate precision check) before entering the main loop.
\end{remark}

\begin{algorithm}[t]
  \raggedright
  \begin{description}
    \item[input]
          $f : \mathbb{C}^n \to \mathbb{C}^n$;
          $(x, r, A)$, a $\frac{7}{8}$-Moore box;
          $\tau \in (0,1)$

    \item[output] a $\tau$-Moore box with same associated root as~$(x, r, A)$

  \end{description}
  \begin{pseudo}
    def \fn{refine}(f, x, r, A, \tau):\\+
      $y \gets x$; \quad
      $s \gets r$; \quad
      $U \gets A$; \\
      \label{line:refine:mainloop}while not \fn{M}(\bx f, \bx \ud f, y, s, U, \tau):\\+
        $\delta \gets A \cdot \bx f(y)$\\
        if $\| \delta \|_\square \leq \frac{1}{64} \tau s$: \label{line:refine:deltasmall} \\+
          \label{line:refine:shrink}$s\gets \frac12 s$\\
          if $s < \frac1{16} \tau r$:\\+
            %\tn{$\wprec \gets \frac14 \wprec$} \label{line:refine:uprec1}\\--
            \tn{increase precision enough so that $\wprec = o(s)$} \label{line:refine:uprec1}\\--
        elif $\width(y-\delta) > \tfrac{1}{40} \|\delta\|_\square$: \quad\ct{precision check}\label{line:refine:check}\\+
          \tn{increase working precision} \label{line:refine:uprec2}\\-
        else: \\+
          %\color{gray} while $6 \width(y-\delta) > \|\delta\|_\square$:\\+
          %\color{gray}  \tn{increase working precision}\\
          %\color{gray}  \tn{recompute $\delta$}\\-
          \label{line:refine:newton}$y \gets \md(y - \delta)$\\-
          $U \gets \md(\bx \ud f(y))^{-1}$ \hfill\ct{unchecked arithmetic} \label{line:refine:U}\\-

      \label{line:refine:growloop}while $2s \leq 1$ and \fn{M}(\bx f, \bx \ud f, x, 2s, U, \tau):\\+
          $s \gets 2s$\\-
      return $y$, $s$, $U$\\-
  \end{pseudo}
  \caption{Refinement of a Moore box}
  \label{algo:refine}
\end{algorithm}

\subsection{Analysis}

%This section aims at proving the following theorem.

\begin{theorem}
  Let~$f : \mathbb{C}^n \to \mathbb{C}^n$ be a circuit,
  let~$(x, r, A)$ be a $\frac{7}{8}$-Moore box for~$f$% with associated zero~$\zeta$,
  and let~$\tau \in (0, 1)$.
  On input~$x$, $r$, $A$ and~$\tau$, Algorithm~\ref{algo:refine} terminates and
  outputs a $\tau$-Moore box for~$f$ with same associated zero as the input box.
\end{theorem}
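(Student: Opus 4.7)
My plan is to separate correctness from termination. \emph{Correctness.} The main loop can only exit when the interval check of Algorithm~\ref{algo:M} returns \emph{True}, which by Lemma~\ref{lem:moore-interval} certifies that the current triple $(y, s, U)$ is a $\tau$-Moore box. The growth loop also exits only after the same test has accepted the triple with the doubled $s$, so the returned triple is a $\tau$-Moore box. To show that its associated zero is the input zero~$\zeta$, I would argue by induction that $y$ remains in $x + rB$ throughout: each quasi-Newton step moves $y$ by at most $\|A f(y)\|$, and the contraction argument inside the proof of Theorem~\ref{thm:moore} shows that the exact update $y - A f(y)$ maps $x + rB$ into $x + \rho r B$; the rounding $\md(y - \delta)$ only perturbs this by the amount controlled by the precision check on Line~\ref{line:refine:check}. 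Since the output $\tau$-Moore box is centered at such a $y$ and contains a zero, uniqueness in the original input box (Theorem~\ref{thm:moore}) forces this zero to be $\zeta$.

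\emph{Termination of the main loop.} I would establish three interacting convergences. First, whenever the quasi-Newton branch fires, the precision check ensures that the rounded update satisfies $\|y' - \zeta\| \leq \tfrac{\rho + \beta}{1 - \beta}\|y - \zeta\|$, a ratio strictly below~$1$ (this is the content of the forecast Lemma~\ref{lem:convergence-y} previewed in the remark after Algorithm~\ref{algo:refine}), so $y \to \zeta$ geometrically between shrink and precision events. Second, once $y$ is close enough to $\zeta$ that $\|\delta\|_\square \leq \tfrac{1}{64}\tau s$, the shrink branch halves~$s$. Third, Proposition~\ref{prop:lipschitz-continuity} gives that $\bx f(y)$ and $\bx \ud f(y + sB)$ approach $f(\zeta)=0$ and $\ud f(\zeta)$ respectively with error $O(\|y-\zeta\| + s + \wprec)$, and the unchecked assignment on Line~\ref{line:refine:U} makes $U$ approach $\ud f(\zeta)^{-1}$. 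Plugging these into Krawczyk's operator then forces $\|K\|_\square \to 0$, so the check on Line~\ref{line:refine:mainloop} accepts after finitely many iterations.

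\emph{Main obstacle.} The delicate step is to rule out an infinite loop through the two precision-warning branches. For the check on Line~\ref{line:refine:check}, once $y$ is close to $\zeta$ one has $\|\delta\|_\square = \Theta(\|y-\zeta\|)$ by Theorem~\ref{thm:moore}\ref{item:rump:dist}, while $\width(y - \delta) = O(\wprec)$; so only finitely many raises of $\wprec$ are needed before $\width(y - \delta) \leq \tfrac{1}{40}\|\delta\|_\square$ holds and the quasi-Newton branch can fire. For the raise on Line~\ref{line:refine:uprec1}, the instruction $\wprec = o(s)$ ties the working precision to the current box radius, guaranteeing that the Krawczyk interval $K$ stays tight as $s$ shrinks. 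Combining both, the number of precision raises is finite and only finitely many iterations separate successive raises.

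\emph{Growth loop.} Finally, this loop strictly doubles $s$ at each iteration and is capped by $2s \leq 1$, so it terminates in at most $\lceil \log_2(1/s) \rceil$ steps. Its loop invariant preserves the $\tau$-Moore box property, and the identification of the associated zero from the correctness argument applies \emph{a fortiori} to the final~$(y, s, U)$.
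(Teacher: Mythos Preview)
Your decomposition into correctness and termination matches the paper, and the termination sketch is broadly in line with Lemma~\ref{lem:refine-technical}. The genuine gap is in your identification of the associated zero.

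You track the invariant $y\in x+rB$ and then appeal to uniqueness of the zero in $x+rB$. But the zero of the output box $(y,s,U)$ is only known to lie in $y+sB$, and you have not shown $y+sB\subseteq x+rB$. Indeed, after the growth loop $s$ may exceed $r$, and even before it your bound $\|y-x\|\leq r$ leaves no room for~$sB$. So uniqueness in the \emph{input} box does not apply to the output zero.

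The paper closes this with a different invariant (Lemma~\ref{lem:track:y}): $\|y-\zeta\|\leq s$ at all times. The delicate step is the shrink $s\gets\tfrac12 s$, which is only reached when $\|\delta\|_\square\leq\tfrac{1}{64}\tau s$; together with Theorem~\ref{thm:moore}\ref{item:rump:dist} this gives $\|y-\zeta\|\leq 8\|Af(y)\|\leq\tfrac18 s$, so the invariant survives halving. From $\zeta\in y+sB$ and uniqueness of the zero in the \emph{output} box, the identification follows; the growth loop then trivially preserves it since doubling~$s$ only enlarges a box already containing~$\zeta$.

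On termination, two smaller points. Your claim that ``the number of precision raises is finite'' is not how the paper argues: in the contradiction hypothesis of an infinite run, $s\to 0$ forces $\wprec\to 0$ via Line~\ref{line:refine:uprec1} infinitely often. Rather, the paper first shows $s\to 0$ by contradiction (if $s$ stalls then $\|\delta\|_\square>\tfrac{1}{64}\tau s$ forever, the precision check eventually passes, infinitely many quasi-Newton steps fire, hence $\|\delta\|_\square\to 0$, contradiction), and only then derives $y\to\zeta$, $U\to\ud f(\zeta)^{-1}$ and the eventual success of Moore's test. Second, your remark ``once $y$ is close to $\zeta$ one has $\|\delta\|_\square=\Theta(\|y-\zeta\|)$'' is applied at the wrong branch: the precision check on Line~\ref{line:refine:check} is reached only when $\|\delta\|_\square>\tfrac{1}{64}\tau s$, which already bounds $\|\delta\|_\square$ from below for the current~$s$ without any appeal to~$\|y-\zeta\|$.
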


The algorithm does \emph{something} until \fn{M}(\bx f, \bx\ud f, y, r, U, \tau) holds.
Then a second loop does \emph{something} that preserves this property.
So it is obvious, by design, that the algorithm outputs a $\tau$-Moore box.
It remains to check that the associated zeros of the input and output are the same,
and that the algorithm terminates.

We first study the sequence~$y_0 = x, y_1, \dotsc$
where~$y_k$ is the value of~$y$ after the $k$th quasi-Newton iteration.
In principle, the quasi-Newton iteration converges, but roundoff errors could take over.
The precision check (Line~\ref{line:refine:check}) ensures that it does not happen.

\begin{lemma}\label{lem:convergence-y}
  For any~$k \geq 0$, $\|y_k - x\|\leq r$ and $\|y_k - \zeta\| \leq \frac{7}{8} \big( \frac{12}{13} \big)^k r$.
\end{lemma}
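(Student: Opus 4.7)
The plan is to prove both bounds by simultaneous induction on $k$, leveraging properties of the auxiliary map $g(y) = y - A f(y)$ established inside the proof of Theorem~\ref{thm:moore}. Specifically, I use that $g$ is $\rho$-Lipschitz on $x + rB$ with $\rho = \tfrac{7}{8}$, that $g$ sends $x + rB$ into $x + \rho r B$, that $\zeta$ is its unique fixed point, and that Theorem~\ref{thm:moore}\ref{item:rump:dist} provides $\|A f(y)\| \leq (1+\rho)\|y - \zeta\|$ for every $y \in x + rB$. The base case is immediate: $y_0 = x$, and Theorem~\ref{thm:moore}\ref{item:rump:zetaloc} gives $\|y_0 - \zeta\| \leq \rho r$.

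The core of the inductive step is to control the roundoff error $\|y_{k+1} - g(y_k)\|$ committed at Line~\ref{line:refine:newton}. Writing $\delta_k = A \cdot \bx f(y_k)$, both $y_{k+1}$ (the midpoint of $y_k - \delta_k$) and $g(y_k) = y_k - A f(y_k)$ lie in the interval $y_k - \delta_k$, hence $\|y_{k+1} - g(y_k)\| \leq \width(\delta_k)$. Since the precision check at Line~\ref{line:refine:check} did not trigger, $\width(\delta_k) \leq \tfrac{1}{40} \|\delta_k\|_\square$; combining this with the trivial bound $\|\delta_k\|_\square \leq \|A f(y_k)\| + \width(\delta_k)$ gives $\width(\delta_k) \leq \tfrac{1}{39} \|A f(y_k)\|$, and applying the inductive hypothesis $y_k \in x + rB$ via Theorem~\ref{thm:moore}\ref{item:rump:dist} yields $\|y_{k+1} - g(y_k)\| \leq \tfrac{1+\rho}{39} \|y_k - \zeta\|$.

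Closing the induction needs two triangle-inequality estimates. First, using $g(\zeta) = \zeta$ and the $\rho$-Lipschitz property of $g$:
\[ \|y_{k+1} - \zeta\| \leq \|y_{k+1} - g(y_k)\| + \|g(y_k) - \zeta\| \leq \Big(\rho + \tfrac{1+\rho}{39}\Big)\|y_k - \zeta\|, \]
and substituting $\rho = \tfrac{7}{8}$ produces the ratio $\tfrac{7}{8} + \tfrac{15}{312} = \tfrac{12}{13}$. Second, using $g(y_k) \in x + \rho r B$ and $\|y_k - \zeta\| \leq \rho r$, I get $\|y_{k+1} - x\| \leq \rho r + \tfrac{1+\rho}{39}\|y_k - \zeta\| \leq \rho r \big(1 + \tfrac{1+\rho}{39}\big) < r$, so $y_{k+1}$ remains in the input box.

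The main obstacle is not topological but the careful accounting of constants: the precision threshold $\tfrac{1}{40}$ at Line~\ref{line:refine:check} and the contraction factor $\rho = \tfrac{7}{8}$ of the input Moore box are tuned precisely so that $\rho + \tfrac{1+\rho}{39} = \tfrac{12}{13} < 1$, ensuring that the unavoidable roundoff is absorbed without destroying the geometric convergence of the quasi-Newton iterates.
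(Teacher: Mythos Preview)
Your argument is essentially the paper's own proof, and the constants work out identically. There is one slip worth flagging: the quantity controlled by the precision check at Line~\ref{line:refine:check} is $\width(y_k-\delta_k)$, not $\width(\delta_k)$. Because the interval subtraction $y_k-\delta_k$ is itself computed with roundoff, the resulting box can be strictly wider than $\delta_k$, and $y_{k+1}=\md(y_k-\delta_k)$ need not lie in the exact translate $\{y_k-d:d\in\delta_k\}$; so your inequality $\|y_{k+1}-g(y_k)\|\leq\width(\delta_k)$ is not justified as written. The fix is immediate and is exactly what the paper does: bound $\|y_{k+1}-g(y_k)\|\leq\width(y_k-\delta_k)\leq\tfrac{1}{40}\|\delta_k\|_\square$ directly from the check, and then use $\width(\delta_k)\leq\width(y_k-\delta_k)$ inside your estimate $\|\delta_k\|_\square\leq\|Af(y_k)\|+\width(\delta_k)$. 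The resulting error bound $\tfrac{1+\rho}{39}\|y_k-\zeta\|=\tfrac{5}{104}\|y_k-\zeta\|$ and the ratio $\tfrac{12}{13}$ are unchanged.
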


\begin{proof}
  We prove the claim by induction.
  First, $y_0 = x$, so we have $\|y_0 - x\| = 0$ and $\|y_0 - \zeta\| \leq  \frac{7}{8} r$, by Theorem~\ref{thm:moore}\ref{item:rump:zetaloc}.
  Then, by definition, $y_{k+1} = \md( y_k - \delta_k)$,
  where~$\delta_k  = A \cdot \bx f(y_k)$ computed in interval arithmetic.
  So there is some~$\epsilon_k \in \mathbb{C}^n$ with~$\|\epsilon_k\| \leq \width(y_k - \delta_k)$ such that
  \begin{equation}
    \label{eq:14}
    y_{k+1} = y_k - A f(y_k) + \epsilon_k = g(y_k) + \epsilon_k.
  \end{equation}
  Recall that~$g : y \mapsto y - A f(y)$ is a~$\tfrac{7}{8}$-Lipschitz continuous function on~$x+rB$ such that~$g(\zeta) = \zeta$
  and~$g(x+ rB) \subseteq x + \smash{\tfrac78}B$.
  In particular,
  \begin{gather}
    \label{eq:23}
    \|y_{k+1} - \zeta\| \leq \tfrac78 \|y_k - \zeta\| + \|\epsilon_k\|,\quad \text{and}\\
    \label{eq:24}
    \|y_{k+1} - x\| \leq \tfrac 78 r + \|\epsilon_k\|.
  \end{gather}
  The precision check (Line~\ref{line:refine:check}) implies that
  \begin{equation}
    \label{eq:12}
    \|\epsilon_k\| \leq \width(y_k - \delta_k) \leq \tfrac{1}{40} \|\delta_k\|_\square.
  \end{equation}
  Moreover $\width(\delta_k) \leq \width(y_k - \delta_k)$ so,
  \begin{align*}
    \|\delta_k\|_\square &\leq \| A f(y_k) \| + \width(\delta_k)
                         \leq \| A f(y_k) \| + \tfrac{1}{40} \|\delta_k\|_\square,
                         %&\leq \tfrac{15}{8} \| y_k - \zeta \| + \tfrac{1}{40} \|\delta_k\|_\square, \quad\text{,}
  \end{align*}
  and by Theorem~\ref{thm:moore}\ref{item:rump:dist} it follows
  \begin{equation}
    \label{eq:16}
    \|\delta_k\|_\square \leq \tfrac{40}{39} \|A f(y_k)\| \leq \tfrac{25}{13} \|y_k - \zeta\|.
  \end{equation}
  In combination with~\eqref{eq:12}, we obtain
  \begin{equation} \label{eq:13}
    \| \epsilon_k \| \leq \tfrac{5}{104} \| y_k - \zeta \|.
  \end{equation}
  It follows easily from~\eqref{eq:23} and~\eqref{eq:13} that
  \begin{equation}
    \label{eq:15}
    \|y_{k+1}-\zeta\|\leq \tfrac{12}{13}\|y_k - \zeta\|.
  \end{equation}
  From the induction hypothesis, we have~$\|y_k - \zeta\| \leq \smash{\frac78} r$
  and it follows from~\eqref{eq:24} and \eqref{eq:13} that~$\|y_{k+1} - x\| \leq r$, which proves the induction step.
\end{proof}

\begin{lemma}\label{lem:track:y}
  At any point of the algorithm,
  $\|y - \zeta\| \leq s$.
\end{lemma}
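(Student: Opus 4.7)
The claim is a loop invariant for Algorithm~\ref{algo:refine}. My plan is to induct on the sequence of operations performed and check that each step which modifies $y$ or $s$ preserves the inequality $\|y-\zeta\|\leq s$. Initially $y=x$ and $s=r$, and Theorem~\ref{thm:moore}\ref{item:rump:zetaloc} gives $\|x-\zeta\|\leq \tfrac{7}{8}r < r$, which is the base case.

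The precision-increase lines leave $y$ and $s$ untouched, and the final doubling loop (Line~\ref{line:refine:growloop}) only grows $s$, so these branches trivially preserve the invariant. The real work is in the shrinking branch and the quasi-Newton branch of the main loop.

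For the shrinking branch (Line~\ref{line:refine:shrink}), the guard is $\|\delta\|_\square \leq \tfrac{1}{64}\tau s$. Because $\delta = A\cdot \bx f(y)$ is an interval containing the exact value $A f(y)$, we have $\|A f(y)\| \leq \|\delta\|_\square$. Lemma~\ref{lem:convergence-y} guarantees that at every moment $y\in x+rB$, so Theorem~\ref{thm:moore}\ref{item:rump:dist} applies with $\rho=\tfrac{7}{8}$ and yields
\[
  \|y-\zeta\| \;\leq\; \frac{1}{1-\tfrac{7}{8}}\|Af(y)\| \;=\; 8\|Af(y)\| \;\leq\; 8\cdot \tfrac{1}{64}\tau s \;=\; \tfrac{\tau}{8}s \;<\; \tfrac{s}{2}.
\]
After the update $s\gets \tfrac{s}{2}$ the invariant therefore still holds, with comfortable slack.

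For the quasi-Newton branch (Line~\ref{line:refine:newton}), $s$ is unchanged, and the computation performed in the proof of Lemma~\ref{lem:convergence-y} directly applies: combining the precision check~\eqref{eq:12}, Theorem~\ref{thm:moore}\ref{item:rump:dist} (giving~\eqref{eq:13}) and the $\tfrac{7}{8}$-Lipschitz continuity of $g(w)=w-Af(w)$ yields $\|y_{\mathrm{new}}-\zeta\|\leq \tfrac{12}{13}\|y-\zeta\|\leq s$. The only subtle point in the whole argument is making sure Theorem~\ref{thm:moore}\ref{item:rump:dist} is in fact applicable at every step, i.e.\ that $y$ never leaves $x+rB$; this is exactly the content of the other half of Lemma~\ref{lem:convergence-y}, which we have already established.
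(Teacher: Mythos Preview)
Your proof is correct and follows essentially the same approach as the paper: verify the invariant at initialization, use~\eqref{eq:15} from Lemma~\ref{lem:convergence-y} for the quasi-Newton step, and use Theorem~\ref{thm:moore}\ref{item:rump:dist} together with the guard $\|\delta\|_\square \leq \tfrac{1}{64}\tau s$ for the shrinking step. You are slightly more explicit than the paper in noting that applicability of Theorem~\ref{thm:moore}\ref{item:rump:dist} requires $y\in x+rB$, which is supplied by Lemma~\ref{lem:convergence-y}.
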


\begin{proof}
  The inequality obviously holds at the start of the algorithm.
  It remains to check that it still holds when~$y$ is moved by a quasi-Newton iteration or when~$s$ is halved.
  The norm~$\|y - \zeta\|$ decreases when a quasi-Newton iteration is performed, by~\eqref{eq:15}. So the inequality is preserved in this case.
  In the second case, we note that Line~\ref{line:refine:shrink}
  is only reached when~$\|\delta\|_\square \leq \frac{1}{64} \tau s$.
  Since~$\|A f(y)\| \leq \|\delta\|_\square$,
  this implies, together with Theorem~\ref{thm:moore}\ref{item:rump:dist}, that
  $\|y - \zeta\| \leq 8 \| A f(y) \| \leq \tfrac{1}{8} s$.
  So after~$s \gets \frac s2$, we have $\|y - \zeta\| \leq \frac14 s$, and the inequality holds.
\end{proof}

Since~$\zeta \in y + sB$, it is clear that the Moore box output by the algorithm
is associated to~$\zeta$, by unicity of the associated root.
It only remains to settle termination.

\begin{lemma}\label{lem:refine-technical}
  Considering the values of~$y$, $s$, $U$ and~$\wprec$ in an infinite run of Algorithm~\ref{algo:refine},
  we have:
  \begin{enumerate*}[(i)]
    \item\label{item:4} $s \to 0$;
    \item\label{item:5} $\wprec  = o(s)$;
    \item\label{item:2} $y \to \zeta$;
    \item\label{item:3} $U \to \ud f(\zeta)^{-1}$;
    \item\label{item:1} $\| U f(y) \| \leq \frac 12 \tau s$ eventually.
  \end{enumerate*}
\end{lemma}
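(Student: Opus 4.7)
The strategy is to establish items (i)--(v) in order, exploiting the trichotomy of actions in the main loop (shrink, precision increase, quasi-Newton) together with the earlier lemmas.

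I would first prove (i) by contradiction. Suppose $s$ stabilizes at some $s_* > 0$, so after finitely many shrinks every iteration is a precision increase or a Newton step. In all sub-cases the assumption collapses: if Newton fires infinitely often, Lemma~\ref{lem:convergence-y} gives $y \to \zeta$ and $\|Af(y)\|\to 0$; if also $\wprec \to 0$ then $\width(\delta)\to 0$ and $\|\delta\|_\square \leq \|Af(y)\|+\width(\delta)$ eventually drops below $\tfrac{\tau s_*}{64}$, triggering a shrink; if instead $\wprec$ stabilizes, $\width(\delta)$ stays bounded below while $\|Af(y)\|\to 0$, so the precision-check inequality $\|\delta\|_\square \geq 40\,\width(\delta)$ is violated and a precision increase fires; if Newton runs only finitely often, only precision increases persist, $\wprec\to 0$ forces $\width(\delta)\to 0$, and the precision check eventually succeeds, triggering another Newton step. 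All cases contradict the assumption, so $s\to 0$. Item (ii) follows: by (i) there are infinitely many shrinks and eventually every one finds $s < 16\tau r$, so Line~\ref{line:refine:uprec1} enforces $\wprec = o(s)$; since both precision-adjustment lines only decrease $\wprec$, the relation persists between shrinks. Item (iii) is immediate from Lemma~\ref{lem:track:y}: $\|y-\zeta\|\leq s\to 0$.

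For (iv), my plan is to show Newton fires infinitely often so $U$ is continually refreshed. At every shrink, the triggering condition $\|\delta\|_\square \leq \tfrac{\tau s}{64}$ combined with Theorem~\ref{thm:moore}\ref{item:rump:dist} applied with $\rho = \tfrac{7}{8}$ gives $\|y-\zeta\| \leq \tfrac{\tau s}{8}$. Just after the shrink, $s$ halves while $\|y-\zeta\|$ is unchanged, so $\|y-\zeta\|/s \leq \tau/4$; the next shrink requires this ratio to descend to $\tau/8$ while $s$ is constant, and only a Newton step can decrease $\|y-\zeta\|$, forcing Newton to fire between consecutive shrinks. Each update performs $U \gets \md(\bx \ud f(y))^{-1}$; combining $y\to \zeta$, $\wprec\to 0$, Proposition~\ref{prop:lipschitz-continuity}, and continuity of matrix inversion yields $U\to \ud f(\zeta)^{-1}$. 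For (v), linearizing $f$ at $\zeta$ (using $f(\zeta)=0$) together with (iv) gives $\|Uf(y)\| = (1+o(1))\|y-\zeta\|$; combined with the bound $\|y-\zeta\|/s \leq \tau/4$ valid just after every shrink and non-increasing between shrinks, this yields $\|Uf(y)\| \leq (1+o(1))\tfrac{\tau s}{4} \leq \tfrac{\tau s}{2}$ eventually.

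The delicate step is the Newton-must-fire argument for (iv): the ratio-doubling argument is clean when $\|y-\zeta\|>0$, but breaks down in the degenerate scenario where $y$ coincides exactly with $\zeta$, letting shrinks persist without Newton and leaving $U$ frozen. Handling this edge case---by observing that Moore's criterion would then asymptote to $\|I - U\,\ud f(\zeta)\|$, which is controlled by the precision at the last Newton step, so Moore eventually passes and the hypothesized infinite run cannot in fact occur---is where the main bookkeeping effort lies.
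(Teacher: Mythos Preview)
Your arguments for (i)--(iii) are essentially the paper's; your use of Lemma~\ref{lem:track:y} for (iii) is in fact cleaner than the paper's route via $\|\delta\|_\square \to 0$ and Theorem~\ref{thm:moore}\ref{item:rump:dist}.

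The divergence at (iv) stems from a misreading of Algorithm~\ref{algo:refine}: Line~\ref{line:refine:U} sits at the same indentation level as the outer conditional, not inside the \emph{else} branch, so $U \gets \md(\bx \ud f(y))^{-1}$ is executed at \emph{every} iteration of the main loop, regardless of which branch was taken. With that, (iv) is immediate from (iii), $\wprec\to 0$, and continuity of matrix inversion---this is exactly the paper's one-line argument. Your ratio-doubling argument is therefore unnecessary, and as stated it is also flawed: the inequality $\|y-\zeta\|/s\le \tau/4$ after a shrink is only an upper bound, so nothing prevents the actual ratio from already lying below $\tau/8$, permitting another shrink with no intervening Newton step. (If one insisted on your reading of the algorithm, the correct repair would be: if Newton stops firing and $y\neq\zeta$ is frozen, then $\|y-\zeta\|/s\to\infty$ as $s\to 0$, so only finitely many further shrinks can occur, contradicting~(i); this funnels everything into the $y=\zeta$ edge case you identify---but the point is moot once Line~\ref{line:refine:U} is read correctly.) For (v), the paper instead decomposes $Uf(y)=(U-\ud f(y)^{-1})f(y)+\ud f(y)^{-1}A^{-1}\cdot Af(y)$ and bounds the pieces via Theorem~\ref{thm:moore}\ref{item:rump:triple} and the shrink condition $\|Af(y)\|\le\|\delta\|_\square\le\tfrac{1}{64}\tau s$; your linearization-plus-ratio approach is a legitimate alternative.
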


\begin{proof}
  % We first check that $\wprec \to 0$.
  % If not, then after a certain point, every iteration of the main loop performs a quasi-Newton iteration (because the other cases cause the working precision to raise).
  % By~\eqref{eq:16}, this implies that~$\|\delta\|_\square \to 0$.
  % So ultimately~$\|\delta\|_\square \leq \frac{1}{16} \tau s$,
  % and the quasi-Newton iteration should not be performed.
  First, we show that~$s \to 0$.
  Assume it does not,
  implying that after a certain point, $s$ is never halved,
  so we always have
  \begin{equation}\label{eq:17}
    \|\delta\|_\square > \tfrac{1}{64} \tau s.
  \end{equation}
  Since~$s$ is not halved, every iteration of the main loop fails the ``if'' condition, so it reaches reaches the ``elif'' condition: the precision check.
  It may fail it (and the working precision is raised)
  or pass it (and a quasi-Newton iteration is performed).
  The precision check cannot fail for ever.
  Indeed, every fail causes the working precision to increase
  so~$\width(y - \delta) \to 0$, by Proposition~\ref{prop:lipschitz-continuity}\ref{item:lipcont:width}.
  By~\eqref{eq:17}, $\|\delta\|_\square$ is bounded below,
  so we have $\width(y - \delta) \leq \frac{1}{40}\|\delta\|_\square$
  and the precision check passes.
  This implies that infinitely many quasi-Newton iterations are perfomed.
  By~\eqref{eq:16}, this implies that~$\|\delta\|_\square \to 0$, in contradiction with~\eqref{eq:17}.
  Therefore, $s\to 0$, proving~\ref{item:4}.
  When~$s\to 0$, Line~\ref{line:refine:uprec1}
  ensures that~$\wprec = o(s)$.
  This checks~\ref{item:5}.

  The radius~$s$ is only halved when~$\|\delta\|_\square \leq \frac{1}{64} \tau s$.
  Since~$s$ is repeatedly halved, this implies that~$\|\delta\|_\square \to 0$,
  Therefore $A f(y) \to 0$, and, by Theorem~\ref{thm:moore}\ref{item:rump:dist}, $y \to \zeta$, proving~\ref{item:2}.
  Since $\wprec \to 0$ and~$U = \ud f(y)^{-1}$, up to roundoff errors,
  we also have~$U \to \ud f(\zeta)^{-1}$, proving~\ref{item:3}.
  So it only remains to check~\ref{item:1}.
  We decompose
  \begin{equation}
    \label{eq:8} U f(y) =
    \underbrace{\big( U - \ud f(y)^{-1} \big)}_{= O(\wprec)} \cdot \underbrace{f(y)}_{\to 0} + \underbrace{\big(\ud f(y)^{-1} A^{-1}\big)}_{\opnorm{-} \leq 8} \cdot \underbrace{A f(y)}_{\|-\|\leq \frac{1}{64}\tau s},
  \end{equation}
  using Theorem~\ref{thm:moore}\ref{item:rump:triple}.
  This proves that
  $\| Uf(y)\| \leq \tfrac{1}{8} \tau s + o(s)$.
  So eventually~$\| U f(y) \| \leq \frac 12 \tau s$. This concludes the proof.
\end{proof}

It is now easy to prove that Algorithm~\ref{algo:refine} terminates.
Let $e = s^{-1} U \cdot \bx f(y)$.
Both~$U$ and~$y$ converges (Lemma~\ref{lem:refine-technical}\ref{item:2} and~\ref{item:3}), in particular they are bounded.
By Proposition~\ref{prop:lipschitz-continuity},
$\width(U\cdot \bx f(y)) = O(\wprec)$, and by Lemma~\ref{lem:refine-technical}\ref{item:5}, this is~$o(s)$.
So after division by~$s$, we have~$\width(e) = o(1)$.
Since $s^{-1} U f(y) \in e$,
we have
\begin{equation}
  \label{eq:22}
  \| e \|_\square \leq s^{-1} \| U f(y) \| + \width(e) \leq \tfrac12 \tau + o(1).
\end{equation}
Moreover, $y + s B \to \left\{ \zeta \right\}$ in the Hausdorff metric
because~$y\to \zeta$ and~$s\to 0$
(Lemma~\ref{lem:refine-technical}\ref{item:2} and~\ref{item:4}).
Since $\wprec \to 0$,
Proposition~\ref{prop:lipschitz-continuity} implies that
\[ (I_n - U \cdot \bx \ud f(y + sB)) \cdot B \to \big( I_n - \ud f(\zeta)^{-1} \cdot \ud f(\zeta) \big) \cdot B = 0. \]
It follows that
$\left\| -e + (I_n - U \cdot \bx \ud f(y + sB)) \cdot B  \right\|_\square \leq  \|e\|_\square + o(1).$
By~\eqref{eq:22}, this is eventually less than~$\tau$, which means that Moore's criterion $M(\bx f, \bx \ud f, y, s, U, \tau)$ holds, and the main loop terminates.
Due to the condition~$s \leq \frac12$, the second loop, that tries growing~$s$, also terminates.

\begin{algorithm}[b]
  \raggedright
  \begin{description}
    \item[input]  $F_\bullet$, a circuit~$\mathbb{C}\times \mathbb{C}^{n} \to \mathbb{C}^n$;
          $(x, r, A)$, a $\frac78$-Moore box for~$F_0$
    \item[output] a Moore box for~$F_1$
  \end{description}
  \begin{pseudo}
    def \fn{track}(F_\bullet, x, r, A):\\+
      $t\gets 0$; \quad      $h \gets 1$\\
      while $t < 1$: \label{line:refine:main-loop}\\+
        $x, r, A \gets \fn{refine}(F_t, x, r, A, \frac 18)$ \label{line:track:refine}\\
        $h \gets 2h$; \quad $T \gets [t, t + h]$ \\
        \label{line:refine:inner-loop}while not \fn{M}(\bx F_T, \bx \ud F_T, x, r, A, \frac78):\\+
          $h \gets \frac12 h$; \quad $T \gets [t, t + h]$\\
          if $\wprec > h$:\\+
            \tn{increase working precision}\\--
          $t \gets \sup T$\\-

    return $x, B$
  \end{pseudo}
  \caption{Path tracking}
  \label{algo:track}
\end{algorithm}

\section{Path tracking}
\label{sec:path-tracking}

\subsection{Setting}\label{sec:setting}

We are given an arithmetic circuit~$F : \mathbb{C} \times \mathbb{C}^{n} \to \mathbb{C}^n$.
The first argument is the parameter and put in subscript, so that~$F_t$ denotes the map~$\mathbb{C}^n \to \mathbb{C}^n$ obtained from~$F$ by specialization of the parameter.
It also denotes the circuit obtained by replacing the input nodes \#1 (the index of the parameter variable), with a constant node~$t$.

Let~$\zeta \in \mathbb{C}^n$ be a regular zero of~$F_0$,
that is~$F_0(\zeta) = 0$ and assume that the~$n\times n$ matrix~$\ud F_0(\zeta)$ is invertible.
There is a unique open interval~$I \subseteq \mathbb{R}$ containing~$0$
and a unique continuous function~$Z : I \to \mathbb{C}^n$ such that:
\begin{enumerate}[(i)]
  \item $Z_0 = \zeta$;
  \item $F_t(Z_t) = 0$ for any~$t\in I$;
  \item \label{it:boundary-behavior} if $b \in \overline{I} \setminus I$, then either
  \begin{enumerate}[(a)]
    \item $\lim_{t\to b} \| Z_t\| = \infty$; or
    \item $\lim_{t\to b} \det \left( \ud F_t(Z_t) \right) = 0$.
  \end{enumerate}
\end{enumerate}
This follows from the study of the differential equation
\begin{equation}
  \label{eq:1}
\textstyle  \frac{\ud}{\ud t} Z_t = - \ud F_t(Z_t)^{-1} \cdot \dot F_t( Z_t )
\end{equation}
obtained from the equation~$F_t(Z_t) = 0$ by differentiation,
and where~$\dot F_t$ denote the partial derivative of~$F_t$ with respect to~$t$.
The existence and uniqueness of a local solution is given by the Picard-Lindelöf Theorem \parencite[Theorem~19.1.1]{Lang_1997}.
There is a unique maximal solution interval, this is~$I$, and
the condition~\ref{it:boundary-behavior} reflects the behavior of the solution at the boundary of the maximal interval of definition \parencite[Theorem~19.2.4]{Lang_1997}: the solution diverges or leaves the domain of definition of the differential equation.

In what follows, we assume that~$\zeta$ is given by a $\frac78$-Moore box, and we aim at computing a Moore box for~$Z_1$, as a zero of~$F_1$,
assuming that~$1 \in I$.

\subsection{Algorithm}

Algorithm~\ref{algo:track}
performs the path tracking operation, as defined above,
using the \fn{refine} algorithm.
The main ingredient is the use of Algorithm~\ref{algo:M}
with interval functions~$\mathcal{F}$ and~$\ud\mathcal{F}$
that are exensions of~$F_t$ and~$\ud F_t$ respectively
for a range of values of~$t$.

More precisely, let~$T \in \bx \mathbb{R}$.
In the circuit representing~$F_\bullet$ and~$\ud F_\bullet$,
replace the input nodes~\#1 (the index of the parameter variable) by constant nodes containing the interval value~$T$.
We obtain circuits that can be evaluated over~$\bx \mathbb{C}^n$.
We denote~$\bx F_T$ and~$\bx \ud F_T$ these circuits.
The fundamental property of interval arithmetic
guarantees that the interval functions defined by~$\bx F_T$ and~$\bx \ud F_T$
are interval extensions of~$F_t$ and~$\ud F_t$ respectively, for any~$t\in T$.
In particular, if \fn{M}(\bx F_T, \bx \ud F_T, x, r, A, \frac78) returns \emph{True},
then $(x, r, A)$ is a~$\smash{\frac78}$-Moore box for~$F_t$ for any~$t\in T$.
This follows from Lemma~\ref{lem:moore-interval}, applied with~$f = F_t$
and the interval extensions~$\bx f = \bx F_T$ and~$\bx \ud f = \bx \ud F_T$.

Based on this idea,
given a~$t \in [0,1]$ and a $\smash{\frac78}$-Moore box for~$F_t$,
we refine it into a~$\smash{\frac18}$-Moore box~$(x, r, A)$
then we search for a interval~$T = [t, t+\delta]$
such that~$M(\bx F_T, \bx\ud F_T, x, r, A, \smash{\frac78})$ holds.
If the search is successful, we know that~$(x, r, A)$ is a $\frac78$-Moore box for~$F_{t+\delta}$
and we can repeat the process.

The correctness of the algorithm is glaring, but termination is not.
Can we find a positive $\delta$ at each step? Does the process eventually reach~$t=1$? Or may it converge
to a lower value of~$t$?

\subsection{Analysis}

\begin{theorem}\label{thm:track}
  Let~$F_\bullet : \mathbb{C}\times \mathbb{C}^n\to \mathbb{C}^n$ be a circuit.
  Let~$(x, r, A)$ be a $\frac78$-Moore box for~$F_0$ with associated zero~$\zeta$.
  Let~$I\subseteq \mathbb{R}$ and~$Z : I \to \mathbb{C}^n$ be defined as in Section~\ref{sec:setting}.

  Algorithm~\ref{algo:track} terminates
  if and only if\/~$1\in I$. In this case,
  it outputs a $\frac78$-Moore box for~$F_1$
  with associated zero~$Z_1$.
\end{theorem}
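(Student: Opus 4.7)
My plan is to split the theorem into three parts: correctness via a loop invariant maintained by the main loop, termination when $1 \in I$, and non-termination when $1 \notin I$.

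First, I would establish the loop invariant that, at the top of each iteration of the main loop, $(x, r, A)$ is a $\frac{7}{8}$-Moore box for $F_t$ with associated zero $Z_t$. This holds initially by hypothesis. At each iteration, the call to \fn{refine} produces a $\frac{1}{8}$-Moore box for $F_t$ with the same associated zero $Z_t$, by the preceding refinement theorem. The inner loop then shrinks $h$ until Lemma~\ref{lem:moore-interval} certifies that $(x, r, A)$ is a $\frac{7}{8}$-Moore box for $F_{t'}$ for every $t' \in T = [t, t+h]$. Theorem~\ref{thm:moore} produces for each such $t'$ an associated zero $\zeta_{t'} \in x + rB$ with $\ud F_{t'}(\zeta_{t'})$ invertible (by Theorem~\ref{thm:moore}\ref{item:rump:invertibility}); the implicit function theorem then makes $t' \mapsto \zeta_{t'}$ continuous on $T$, and, combined with $\zeta_t = Z_t$, this identifies $\zeta_{t'}$ with the unique continuation $Z_{t'}$ throughout $T$. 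Hence after $t \gets \sup T$ the invariant is restored, and when the main loop exits with $t \geq 1$ the invariant yields the claimed $\frac{7}{8}$-Moore box for $F_1$ associated to $Z_1$.

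For termination when $1 \in I$: on the compact interval $[0, 1] \subseteq I$ the path $Z$ is continuous and the Jacobians $\ud F_t(Z_t)$ are uniformly invertible, so $\opnorm{\ud F_t(Z_t)^{-1}}$ is uniformly bounded. I would use these facts to exhibit a positive constant $h_{\min} > 0$, independent of $t$, such that for every $t \in [0, 1]$, once \fn{refine} has returned a $\frac{1}{8}$-Moore box for $F_t$ at sufficiently small working precision, Moore's interval criterion on $[t, t + h_{\min}]$ already succeeds. The argument would mirror the termination proof of Algorithm~\ref{algo:refine}: Proposition~\ref{prop:lipschitz-continuity} controls how $\bx F_T$ and $\bx \ud F_T$ inflate as $T$ grows, and one picks $h_{\min}$ small enough that this inflation is absorbed into the slack between the contraction factors $\frac{1}{8}$ and $\frac{7}{8}$. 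It follows that the inner loop succeeds with a step bounded below by a positive constant, and the main loop reaches $t \geq 1$ in finitely many iterations.

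The converse direction comes for free from the loop invariant: whenever the algorithm advances $t$, Theorem~\ref{thm:moore} guarantees that the new value lies in $I$, because a Moore box for $F_t$ produces a regular zero near $x$ that must be $Z_t$; hence if the algorithm terminates then $t = 1$ lies in $I$, and contrapositively $1 \notin I$ forces non-termination. The main obstacle will be the quantitative step for termination: producing a uniform lower bound on step size along the compact path. This requires tracking how the slack in Moore's criterion depends jointly on $T$, on the Moore box parameters produced by \fn{refine}, and on the working precision, uniformly in $t \in [0, 1]$. Proposition~\ref{prop:lipschitz-continuity} together with the metric estimates of Theorem~\ref{thm:moore}\ref{item:rump:triple}--\ref{item:rump:dist} should suffice, but the bookkeeping requires care with the adaptive precision model.
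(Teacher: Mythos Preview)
Your loop-invariant argument matches the paper's Lemma~\ref{lem:track} in spirit, though the paper uses a clopen argument on the set $J = \{t \in T\cap I : \|Z_t - x\| \leq r\}$ together with the boundary behaviour~\ref{it:boundary-behavior} of~$Z$ to obtain $T \subseteq I$, rather than the implicit function theorem. Your IFT sketch is fine once you make explicit that the continuous family of regular zeros you build on $T$ forces $T \subseteq I$ by maximality; the paper is just more careful here.

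The termination argument is where your plan diverges, and where there is a genuine gap. You aim for a uniform lower bound $h_{\min}$ on the step size, but the admissible step depends on the radius $r$ returned by \fn{refine}: after \fn{refine} we only know $\|r^{-1} A\, \bx F_t(x)\|_\square \leq \tfrac18$, and passing to $T = [t, t+h]$ perturbs this by roughly $r^{-1}\|A\,\dot F_t(x)\|\, h$, so any uniform $h_{\min}$ requires a uniform \emph{lower} bound on $r$. Nothing in your proposal establishes this, and it does not follow from compactness of the path alone---it hinges on the growing loop at the end of \fn{refine}, which you do not invoke.

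The paper sidesteps this by arguing termination by contradiction. Assuming the main loop runs forever, $t \to s$, $h \to 0$, $\wprec \to 0$, and $x$, $A$ stay bounded by Theorem~\ref{thm:moore}. Two cases are then treated separately: if $r$ stays bounded away from~$0$, one compares the interval Krawczyk quantities at $t$ and at $T$ (both with the same fixed $x, r, A$) and obtains a contradiction via Proposition~\ref{prop:lipschitz-continuity}\ref{item:lipcont:dist}; if $r \to 0$ along a subsequence, then the failure of the growing loop in \fn{refine} (which certifies that doubling $r$ breaks the $\tfrac18$-criterion) yields $\|r^{-1} A\,\bx F_t(x)\|_\square \geq \tfrac14 + o(1)$, contradicting the $\tfrac18$ bound from \fn{refine}'s success. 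This second case is exactly what your direct approach cannot reach without analysing the growing loop. If you want to salvage the uniform-$h_{\min}$ route, you must first prove that the growing loop guarantees $r$ is bounded below along the path; otherwise the contradiction argument is both shorter and complete.
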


We first prove the termination of the inner ``while'' loop.
Assume it does not terminate.
Let~$K_T \in  \bx \mathbb{C}^n$ denote the box vector
computed in \fn{M}(\bx F_T, \bx \ud F_T, x, r, A, \smash{\frac78})
and let~$K_t$ denote the one that would be computed in \fn{M}(\bx F_t, \bx \ud F_t, x, r, A, \frac18).
Because the loop does not terminate, we always have
$\| K_T \|_\square > \smash{\tfrac 78}$.
However, the triple~$(x, r, A)$ comes from \fn{refine}, with contraction factor~$\frac18$.
This procedure checks~\fn{M}(\bx F_t, \ud \bx F_t, x, r, A, \smash{\frac18}),
so it is guaranteed that
$\|K_t\|_\square \leq \smash{\tfrac 18}$.
Again because the loop does not terminate, both~$h$ and~$\wprec$ go to~0.
So~$T\to t$ in the Hausdorff metric, while~$A$, $x$ and~$r$ are fixed.
In particular, $K_T \to K_t$, by Proposition~\ref{prop:lipschitz-continuity}\ref{item:lipcont:dist}, which makes the two inequalities above contradict each other.

We now consider the iterations of the main loop.
Let~$x_k$, $r_k$, $A_k$, $t_k$, and~$T_k$ be the value of the respective variables at the end of the~$k$th iteration.
Moreover, let~$T_0 = \left\{ 0 \right\} \in \bx\mathbb{R}$ and let~$(x_0, r_0, A_0)$ denote the input Moore box.
Let~$N$ be the total number of iterations, perhaps infinite.
%By construction, if~$N$ is finite then~$1 \in T_N$.
Recall that~$I$ is the maximal interval of definition of~$Z$.

\begin{lemma}\label{lem:track}
  For any~$0\leq k \leq N$,
  $T_k \subseteq I$ and
  for any~$t \in T_k$,
  $(x_k, r_k, A_k)$ is a~$\frac78$-Moore box for~$F_t$ with associated root~$Z_t$.
\end{lemma}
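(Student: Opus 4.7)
The plan is induction on $k$. The base case $k=0$ is immediate: $T_0 = \{0\} \subseteq I$ since $0 \in I$, and the input box has associated zero $\zeta = Z_0$ by construction.

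For the inductive step I would assume the claim at $k$, write $t_k = \sup T_k$, and trace the invariants through one iteration. By the induction hypothesis at $t = t_k$, the box $(x_k, r_k, A_k)$ is a $\frac78$-Moore box for $F_{t_k}$ with associated zero $Z_{t_k}$. The call to \fn{refine} at Line~\ref{line:track:refine} preserves this associated zero while tightening the contraction factor to $\frac18$. After the inner loop terminates, \fn{M}$(\bx F_{T_{k+1}}, \bx \ud F_{T_{k+1}}, x_{k+1}, r_{k+1}, A_{k+1}, \frac78)$ returns True, and since $\bx F_{T_{k+1}}$ is an interval extension of $F_t$ for every $t \in T_{k+1}$, Lemma~\ref{lem:moore-interval} yields that $(x_{k+1}, r_{k+1}, A_{k+1})$ is a $\frac78$-Moore box for $F_t$ at every such $t$. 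Denote its associated zero by $\xi_t$, the unique zero of $F_t$ in $x_{k+1} + r_{k+1} B$.

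The remaining task is to identify $\xi_t$ with $Z_t$ and to deduce $T_{k+1} \subseteq I$. Continuity of $t \mapsto \xi_t$ on $T_{k+1}$ follows from the implicit function theorem applied at each $t$, which is valid because $\ud F_t(\xi_t)$ is invertible by Theorem~\ref{thm:moore}\ref{item:rump:invertibility}. At the left endpoint, $\xi_{t_k} = Z_{t_k}$ since both are zeros of $F_{t_k}$ in the closed ball $x_{k+1} + r_{k+1} B$ and the Moore box has only one associated zero.

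The main obstacle is passing from this local information to the global statement $T_{k+1} \subseteq I$; I would argue via the maximality of $I$. Gluing $Z$ on $I$ with $\xi$ on $T_{k+1}$ yields a continuous function $\tilde Z$ on $I \cup T_{k+1}$ satisfying $F_t(\tilde Z_t)=0$: the two definitions agree at $t_k$ and, by local uniqueness for the Cauchy problem~\eqref{eq:1} propagated by connectedness, on the whole overlap. If some $b \in T_{k+1}$ lay outside $I$, the least such $b$ would belong to $\overline I \setminus I$; but then as $t \to b^-$ along $I$ we would have $Z_t = \xi_t \to \xi_b$, which stays bounded in $x_{k+1} + r_{k+1} B$, with $\det \ud F_t(Z_t) \to \det \ud F_b(\xi_b) \neq 0$, contradicting condition~\ref{it:boundary-behavior}. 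Hence $T_{k+1} \subseteq I$ and $\xi_t = Z_t$ throughout $T_{k+1}$, closing the induction.
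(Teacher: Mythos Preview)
Your argument is correct and follows the same overall architecture as the paper: induction on~$k$, correctness of \fn{refine} to pass from a $\tfrac78$-box at the left endpoint to a $\tfrac18$-box, Lemma~\ref{lem:moore-interval} to get a $\tfrac78$-Moore box for every~$t\in T_{k+1}$, and finally the boundary behavior~\ref{it:boundary-behavior} of~$Z$ to rule out~$T_{k+1}$ escaping~$I$.

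The one genuine difference is in how you identify the associated zero with~$Z_t$ on~$T_{k+1}\cap I$. You introduce the map~$t\mapsto\xi_t$, appeal to the implicit function theorem for its continuity, and then invoke uniqueness for the Cauchy problem~\eqref{eq:1} to conclude~$\xi=Z$ on the overlap. The paper avoids both the IFT and the auxiliary function~$\xi$: it works directly with the already-continuous~$Z$ and the set
\[ J=\bigl\{t\in T_{k+1}\cap I \ \bigm|\ \|Z_t-x_{k+1}\|\le r_{k+1}\bigr\}. \]
This set is closed by continuity of~$Z$, nonempty (it contains~$t_k$), and open because whenever~$t\in J$ the unique zero in the ball must be~$Z_t$, so Theorem~\ref{thm:moore}\ref{item:rump:zetaloc} gives the strict bound~$\|Z_t-x_{k+1}\|\le\tfrac78 r_{k+1}$, and continuity of~$Z$ then yields a neighborhood of~$t$ in~$J$. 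Connectedness forces~$J=T_{k+1}\cap I$. The advantage of the paper's route is economy: no new function, no IFT, no second invocation of ODE uniqueness. Your route is equally valid and has the merit of making explicit that the Moore box defines its own continuous branch~$\xi$ on all of~$T_{k+1}$, including any portion outside~$I$, which is what you then use to derive the contradiction at the boundary point~$b$.
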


\begin{proof}
  We proceed by induction on~$k$.
  The base case~$k = 0$ is simply the input assumption.
  Assume the statement holds for~$k-1$.
  Let~$s$ be the supremum of~$T_{k-1}$, which is also the infimum of~$T_k$.
  By induction hypothesis,
  $(x_{k-1}, r_{k-1}, A_{k-1})$ is a~$\frac78$-Moore box for~$F_{s}$ with associated zero~$Z_{s}$.
  By definition,
  \[ (x_k, r_k, A_k) = \fn{refine}(F_{s}, x_{k-1}, r_{k-1}, A_{k-1}, \tfrac 18). \]
  (We now drop the index~$k$ everywhere.)
  The correctness of~\fn{refine} implies that~$(x, r, A)$ is a~$\smash{\frac18}$-Moore
  box of~$F_s$ with same associated zero.
  By construction,
  $\fn{M}(\bx F_T, \bx \ud F_T, x, r, A, \frac78)$ holds.
  By Lemma~\ref{lem:moore-interval}, this implies that
  $(x, r, A)$ is a~$\smash{\frac78}$-Moore box for~$F_t$ for any~$t \in T$.

  It remains to prove that~$T \subseteq I$ and that
  for any~$t\in T$, the zero of~$F_t$ associated to~$(x, r, A)$ is~$Z_t$.
  Let
  $J = \left\{ t \in T\cap I \st \| Z_t - x \| \leq r \right\}$.
  This is, by definition, a closed set in~$T\cap I$.
  It is not empty: $s \in J$.
  Moreover, for any~$t \in J$, $Z_t$ is the zero of~$F_t$ associated to~$(x, r, A)$, because there is a unique zero of~$F_t$ in the ball~$x + rB$.
  Therefore, by Theorem~\ref{thm:moore}\ref{item:rump:zetaloc}, we also have~$\|Z_t - x\| \leq \frac78 r$.
  It follows that~$J$ is also open in~$T\cap I$.
  Since~$T\cap I$, is an interval, we have~$J = T\cap I$.

  It only remains to prove that~$T \subseteq I$.
  Recall that~$s = \inf T \in I$ and let~$b = \sup (T\cap I)$.
  By definition,
  $\|Z_t - x\| \leq r$
  for any~$t\in T\cap I$,
  and it remains true as~$t\to b$.
  Moreover, by Theorem~\ref{thm:moore}\ref{item:rump:triple}, $\ud F_t(Z_t)^{-1}$ stays bounded,
  and it remains true as~$t\to b$.
  This shows that~$Z_t$ does not come close to a critical point of~$F_t$ as~$t\to b$.
  This prevents~$b$ from being a boundary point of~$I$ (see Section~\ref{sec:setting}).
  Therefore $T\subseteq I$.
\end{proof}

This proves a part of Theorem~\ref{thm:track}:
if Algorithm~\ref{algo:track} terminates,
then~$1 \in I$ (because~$1\in T_N \subseteq I$)
and~$Z_1$ is the associated zero of the output.
It remains to prove that if~$1 \in I$, then the algorithm terminates.
For contradiction, assume it does not.
The value of the variable~$t$ converges to some~$s \in [0,1]$.
The step size~$h$ goes to~0,
and~$T \to \left\{ s \right\}$ in the Hausdorff metric.
By construction, we also have~$\wprec \to 0$.
Lastly, Lemma~\ref{lem:track} and Theorem~\ref{thm:moore}\ref{item:rump:zetaloc} imply that~$x$ stays in a bounded set.
Indeed, we always have~$\|x - Z_t\| \leq r$ for some~$t\in [0,1]$, and~$r \leq 1$, by construction of \fn{refine}.
Since~$t\mapsto Z_t$ is continuous on~$[0,1]$, it is bounded.
Similarly Theorem~\ref{thm:moore}\ref{item:rump:triple2} implies that~$A$ stays in a bounded set,
because~$t\mapsto \ud F_t(Z_t)^{-1}$ is continuous for~$t\in [0,1]$.

By contruction of \fn{refine}, we always have
\begin{equation}
  \label{eq:4}
  \left \|- r^{-1} A \cdot \bx F_t(x) + (I_n - A \cdot \bx \ud F_t(x+ rB))\cdot B \right\|_\square \leq \tfrac18,
\end{equation}
after Line~\ref{line:track:refine} of each iteration.

We first consider the case where~$r$ stays away from~0:
there is some~$r_0 > 0$ such that~$r \geq r_0$ at every iteration.
Since~$h\to 0$, it is divided infinitely many times, and at least one~\fn{M}(\dotsc) check fails when this happens.
So infinitely often, we have
\begin{equation}\label{eq:19}
  \left\| - r^{-1} A \cdot \bx F_T(x) + (I_n - A\cdot \bx \ud F_T(x+ rB))\cdot B \right\|_\square > \tfrac12.
\end{equation}
As established above, $A$ and~$r$ are bounded, so we may assume that they converge (and~$r$ converges to a positive value).
Since~$t$ and~$T$ have the same limit, in the Hausdorff metric, and~$\wprec\to 0$, the two left-hand sides in~\eqref{eq:4} and~\eqref{eq:19}
also have the same limit (Propositon~\ref{prop:lipschitz-continuity}\ref{item:lipcont:dist}), which contradicts the inequalities.

So we assume that~$r$ does not stay away from zero,
and considering a subset of the iterations, we may assume that~$r \to 0$.
In this case, $x \to Z_s$.
By construction of \fn{refine}, we also have~$A \to \ud F_s(Z_s)^{-1}$.
Moreover, when some~$r < 1$ is returned, this means that the box could not be grown, that is
\[ \left\|- \tfrac12 r^{-1} A \cdot \bx F_t(x) + (I_n - A \cdot \bx \ud F_t(x+ 2rB))\cdot B \right\|_\square > \tfrac18. \]
%Which after multiplication by~2 yields
%\[ \left\|- r^{-1} A \cdot \bx F_t(x) + 2(I_n - A \cdot \bx \ud F_t(x+ 2rB))\cdot B \right\|_\square > \tfrac14. \]
The part $I_n - A \cdot \bx \ud F_t(x+ 2rB)$ converges to~0, and after multipliying by~2, this leads to
\begin{equation}
  \label{eq:21}
  \left\|- r^{-1} A \cdot \bx F_t(x) \right\|_\square \geq \tfrac14 + o(1),
\end{equation}
in contradiction with~\eqref{eq:4} which shows that this magnitude is at most~$\frac18 + o(1)$.
This concludes the proof of termination.

% \begin{remark}
%   The final loop in \fn{refine} simplifies the termination argument above --\,because~\eqref{eq:4} and~\eqref{eq:21} feature the same triple~$(x, r, A$)\,-- but a more technical analysis shows that this loop is not necessary to ensure termination.
%   However, from the efficiency of the algorithm, it is important to give the opportunity to Moore boxes to grow after they shrunk in the neighborhood of a stiff point.
%   For example, we can insert~$s\gets 2s$ before Line~\ref{line:refine:newton} in Algorithm~\ref{algo:refine}.
% \end{remark}

\section{Predictors}
\label{sec:predictors}

\subsection{Rationale}

The performance of Algorithm~\ref{algo:track}
can be greatly improved by incorporating Taylor models.
Consider an iteration of the main loop of \fn{track}.
After the \fn{refine} step, we have~$t$ $x$, $r$, and $A$ such that
\[    \left \|- r^{-1} A \cdot F_t(x) + (I_n - A \cdot \ud F_t(x+ rB))\cdot B \right\|\leq \tfrac18, \]
and we want a~$\delta > 0$ as large as possible such that for all~$\eta \in [0,\delta]$,
\[  \big \|- \underbrace{r^{-1} A \cdot F_{t+\eta}(x)}_{V} + \underbrace{(I_n - A \cdot \ud F_{t+\eta}(x+ rB))\cdot B}_{\Delta} \big\|\leq \tfrac78. \]
For an informal analysis, we may consider~$r$ and~$\delta$ as infinitesimally small and compute with first order expansions.
We obtain that
$\|\Delta \| = O(r + \eta)$
and
\[ \| V \| = r^{-1} \| v \| \eta + O(r^{-1} \eta^2) = O(r^{-1} \eta), \]
where~$v = -A \cdot \dot F_t(x)$ is the \emph{speed vector} related to variation of the zero that we track as the parameter changes (see Equation~\ref{eq:1}).
So the~$V$ term is likely to be the main obstruction in raising~$\delta$
and this suggests that we may expect~$\delta \simeq r$.

In the special case where the speed vector~$v$ vanishes, we may expect the much better~$\delta \simeq \sqrt{r}$.
In principle, it is easy to perform the first order correction
and reduce to the stationary case
by introducing the auxilliary system
$G_\eta(x) = F_{t+\eta}(x - \eta v)$
which is made to satisfy~$\dot G_0(x) = 0$.
If we can rigorously track a zero of~$G_\eta$
as~$\eta$ moves from~$0$,
we can certainly transfer this information to the original sytem~$F_{t+\eta}$.
Higher order corrections are also possible, we can enforce the cancellation of more terms in~$V$, which will increase the step size until the~$\Delta$ term takes over.
However, this idea does not combine nicely with interval arithmetic
which never cancels anything.
This phenomenom, well known as the \emph{dependency problem},
obliterates all ideas based on the cancelations of some dominant terms.
Taylor models is a classical way to circumvent it.

\subsection{Taylor models}

Let~$I \in \bx \mathbb{R}$ be an interval containing~$0$.
A \emph{Taylor model of order~$\nu$ on~$I$}
is a polynomial~$P(\eta) = a_0 + a_1 \eta + \dotsb + a_{\nu+1} \eta^{\nu+1}$ of degree at most~$r+1$ with coefficients in~$\bx \mathbb{C}$.
A Taylor model~$P$ on~$I$ encloses a function~$f : \mathbb{R} \to \mathbb{C}$
if for any~$t\in I$, there are~$\bar a_0,\dotsc,\bar a_{\nu+1} \in \mathbb{C}$ such that~$\bar a_i \in a_i$ and
$f(t) = \bar a_0 +\bar a_1 t+ \dotsb + \bar a_{\nu+1} t^{\nu+1}$.
If~$J \in \bx \mathbb{R}$ is a subinterval of~$I$
then the interval computation of~$P(J)$ contains~$f(J)$ for any function~$f$ enclosed by~$P$.

Since the variable~$t$ is bound to~$I$, we can squeeze a Taylor model of order~$r$ into one of order~$\nu-1$
by replacing the last two terms~$a_{\nu} \eta^{\nu} + a_{\nu+1} \eta^{\nu+1}$ by the single term~$\left( a_{\nu} \boxplus (a_{\nu+1} \boxtimes I) \right) \eta^{\nu}$.
If~$P$ encloses a function~$f$, then the reduced model still does.
We define on order-$\nu$ Taylor models
an addition by the componentwise addition of intervals~$\boxplus$.
We define also a multiplication by the usual polynomial multiplication formula, but using~$\boxplus$ and~$\boxtimes$, which leads to a Taylor model of order~$2\nu+1$,
followed by repeated sequeezing to reduce to order~$\nu$.
These operations are naturally compatible with the enclosure of functions.

In general, the first coefficients~$a_0,\dotsc,a_\nu$ of a Taylor model of order~$\nu$ are narrow intervals enclosing the Taylor expansion of an enclosed function,
their width reflect only roundoff errors.
The last term~$a_{\nu+1} \eta^{\nu+1}$ reflects the~$O(\eta^{\nu+1})$ term of an order-$\nu$ Taylor expansion.
For more details on Taylor models, we refer to \textcite{MooreKearfottCloud_2009,Neumaier_2003,Joldes_2011,BerzHoffstatter_1998}.

\subsection{Path tracking with a predictor}

We consider the same setting as in Section~\ref{sec:path-tracking}.
Suppose that for some~$t \in [0,1]$, we have a $\frac18$-Moore box $(x, r, A)$ for~$F_t$.
Suppose also that we have a vector~$\mathcal{X}(\eta)$ of polynomials
such that~$\mathcal{X}(0) = x$.
Naturally, we will choose~$\mathcal{X}(\eta)$ to approximate the zero~$Z_{t+\eta}$ of~$F_{t+\eta}$
the best we can, but we assume nothing.
$\mathcal{X}$ is called the \emph{predictor}.
Let also~$h > 0$ be what we think is a good step size.

Using the arithmetic of order-$\nu$ Taylor models on the domain~$[0, h]$ (we will typically choose~$\nu=3$), we compute
\[ \mathcal{K} = - r^{-1} A \cdot \bx F_{t+\eta}(\mathcal{X}) + \big[ I_n - A \cdot \bx \ud F_{t+\eta}(\mathcal{X} + rB) \big] \cdot B, \]
which is a vector of Taylor models.
Then we compute~$\mathcal{K}([0, h])$ and check if it is included in~$\frac78 B$.
If it is, then~$(\mathcal{X}(e), r, A)$ is a~$\smash{\frac 78}$-Moore box for~$F_{t+e}$ for any any~$e \in [0, h]$.
This follows from the compatibility of the Taylor model arithmetic with enclosures.

If~$\mathcal{K}([0, h])$ is not included in~$\frac 78 B$, we can try with~$\mathcal{K}([0, \frac h2])$,
or maybe~$\mathcal{K}([0, \smash{\frac h4}])$, we do not need to restart the computation of~$\mathcal{K}$ from scratch with a lower step size.
In principle, we may assume that~$\mathcal{K}(0) \subseteq \frac 18 B$, so
there should be some~$j \in \lparen 0,h \rbrack$ such that~$\mathcal{K}([0, j]) \subseteq \smash{\frac 78} B$,
but if we need~$j$ to be very small, it makes more sense instead to recompute~$\mathcal{K}$ over a smaller domain.

\begin{algorithm}[t]
  \raggedright
  \begin{description}
    \item[input]  $F_\bullet$, a circuit~$\mathbb{C}\times \mathbb{C}^{n} \to \mathbb{C}^n$;
          $(x, r, A)$, a $\frac78$-Moore box for~$F_0$
    \item[output] a Moore box for~$F_1$
  \end{description}
  \begin{pseudo}
    def \fn{track}(F_\bullet, x, r, A):\\+
      $t\gets 0$; \quad      $h \gets \frac12 $; \\
      while $t < 1$: \label{line:refine:main-loop}\\+
        $x, r, A \gets \fn{refine}(F_t, x, r, A, \frac 18)$ \label{line:track:refine}\\
        $h \gets \frac54 h$ \quad \ct{try growing the step size}\\
        $v \gets \md(- A \cdot \bx \dot F_{t}(x))$\\
        $\mathcal{X} \gets x + v \eta$ \quad \ct{$\eta$ is the variable of Taylor models}\\
        \ct{compute~$\mathcal{K}$ with order-2 Taylor model arithmetic on~$[0, h]$}\\
        $\mathcal K \gets -r^{-1} A \cdot \bx F_{t+\eta}(\mathcal{X}) + \big[I_n - A \cdot \bx \ud F_{t+\eta}(\mathcal{X} + rB)\big]\cdot B$\\
        if $\| \mathcal{K}([0, h]) \|_\square > \frac 78$:\\+
          $h \gets \frac h2$ \\
          $\wprec\gets \min(\wprec, h)$\\
          if $\| \mathcal{K}([0, \frac h2]) \|_\square > \frac 78$:\\+
             \ct{unsuccessful, restart the iteration with smaller~$h$}\\
             continue\\--

        $t \gets t + h$\\
        $x \gets \md( \mathcal{X}(h) )$\\-

    return $x, r, A$
  \end{pseudo}
  \caption{Path tracking with the tangent predictor}
  \label{algo:track:fast}
\end{algorithm}

What predictor can we choose?
Ideally, we would choose~$\mathcal{X}$ to be a truncated Taylor expansion of~$Z_{t+\eta}$ around~$\eta = 0$.
But only the first term is easy to get: by Equation~\eqref{eq:1}, we have
\[ Z_{t+\eta} = Z_t - \ud F_t(Z_t)^{-1} \cdot \dot F_t(Z_t) \eta + O(\eta^2). \]
Since~$x$ approximates~$Z_t$ and~$A$ approximates~$\ud F_t(Z_t)^{-1}$, we may choose
$\mathcal{X}_\text{tangent} = x - A \cdot \dot F_t(x) \eta$.
We compute~$\dot F_t(x)$ by automatic differentiation, similarly to~$\ud F_t(x)$.
This is the \emph{tangent predictor}
and it leads to Algorithm~\ref{algo:track:fast}.

\begin{theorem}
  Algorithm~\ref{algo:track:fast} is correct and terminates, in the sense of Theorem~\ref{thm:track}.
\end{theorem}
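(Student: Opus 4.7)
The plan is to mirror the proof of Theorem~\ref{thm:track}, replacing the plain interval evaluation over $T = [t, t+h]$ with Taylor-model arithmetic around the tangent predictor $\mathcal{X}(\eta) = x + v\eta$. The enclosure property of Taylor models, together with Lemma~\ref{lem:moore-interval}, is what makes the scheme certified, and the same boundedness and compactness arguments that drove the analysis of Algorithm~\ref{algo:track} will drive termination here.

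For correctness, I would maintain the invariant that at the top of each iteration of the main loop, $(x, r, A)$ is a $\tfrac{7}{8}$-Moore box for $F_t$ with associated zero $Z_t$. The base case is the input assumption. For the inductive step, the call to \fn{refine} preserves the associated zero, and the algorithm only reaches $t \gets t+h$ after a successful check $\|\mathcal{K}([0, h])\|_\square \leq \tfrac{7}{8}$ for the current value of $h$. Compatibility of Taylor-model arithmetic with pointwise enclosure then yields, for every $e \in [0, h]$,
\[
  \bigl\|-r^{-1} A \cdot \bx F_{t+e}(\mathcal{X}(e)) + \bigl[I_n - A \cdot \bx \ud F_{t+e}(\mathcal{X}(e) + rB)\bigr]\cdot B\bigr\|_\square \leq \tfrac{7}{8},
\]
so $(\mathcal{X}(e), r, A)$ is a $\tfrac{7}{8}$-Moore box for $F_{t+e}$ by Lemma~\ref{lem:moore-interval}. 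Setting $e = h$, absorbing the roundoff in $x \gets \md(\mathcal{X}(h))$, and running the clopen argument from the proof of Lemma~\ref{lem:track} identifies the new associated zero with $Z_{t+h}$, closing the induction.

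For termination, I would argue by contradiction just as in Theorem~\ref{thm:track}. Assuming $1 \in I$ but that the algorithm runs forever, $t$ is nondecreasing and bounded, so $t \to s \in [0,1]$; the invariant above together with items~\ref{item:rump:zetaloc},~\ref{item:rump:triple} and~\ref{item:rump:triple2} of Theorem~\ref{thm:moore} and the boundary characterization of $I$ force $s \in I$ and $s < 1$. Along any subsequence of failing iterations, $h$ is halved and $\wprec$ lowered to at most $h$, so $h, \wprec \to 0$. I would then split into the same two cases as in the earlier proof. If $r$ stays bounded away from $0$, then on a convergent subsequence of $(x, r, A)$ the Taylor-model evaluation $\mathcal{K}([0, h/2])$ degenerates to the pointwise magnitude of the Krawczyk expression at $\eta = 0$, which is $\leq \tfrac{1}{8}$ by the \fn{refine} postcondition, contradicting the failure bound $>\tfrac{7}{8}$. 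If $r \to 0$, then $x \to Z_s$ and $A \to \ud F_s(Z_s)^{-1}$, and the box-growing loop inside \fn{refine} yields the same $\tfrac{1}{4} + o(1)$ versus $\tfrac{1}{8} + o(1)$ contradiction as in the basic case.

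The main obstacle is the degeneration statement invoked in the first case: it requires that for a tangent predictor $\mathcal{X}(\eta) = x + v\eta$ on $[0, h]$ and order-$\nu$ Taylor model arithmetic, $\|\mathcal{K}([0, h])\|_\square$ tends to the pointwise magnitude at $\eta = 0$ as $h, \wprec \to 0$, uniformly in $(x, v, r, A)$ ranging in a compact set. This is a Lipschitz-continuity property for Taylor model arithmetic analogous to Proposition~\ref{prop:lipschitz-continuity} (with the domain shrinking to a point), which should follow inductively from the corresponding properties of $\boxplus$, $\boxtimes$ and of the squeezing rule $a_\nu \boxplus (a_{\nu+1}\boxtimes I)$; it is the one non-routine ingredient.
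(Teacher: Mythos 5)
Your proposal follows the paper's approach exactly: the paper's own proof is the single sentence ``\emph{Mutatis mutandis}, the proof is the same as the one of Theorem~\ref{thm:track},'' and you have filled in precisely the substitutions that sentence implies, including the two-case termination argument ($r$ bounded away from $0$ versus $r \to 0$) and the clopen argument from Lemma~\ref{lem:track}. You have also correctly isolated the one technical ingredient the paper leaves tacit, namely a Lipschitz-continuity property for Taylor-model arithmetic analogous to Proposition~\ref{prop:lipschitz-continuity} (so that $\|\mathcal K([0,h])\|_\square$ collapses to the pointwise Krawczyk magnitude as $h,\wprec\to 0$), and the boundedness of the predictor $v$, which the paper flags as the only requirement on the predictor.
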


\emph{Mutatis mutandis}, the proof is the same as the one of Theorem~\ref{thm:track}.
The quality of the predictor does not matter much, as long as it stays bounded.

If we record the previous values of~$x$ and the tangent vector, and the previous step size,
we can compute the \emph{Hermite predictor}
\begin{multline*}
%\mathcal{X}_\text{Hermite} = x + v \eta + (2h_\text{prev} (v + v_\text{prev}) - 3 (x - x_\text{prev})) \tfrac{\eta^2}{h_\text{prev}^2} +\\ (h_\text{prev} (v + v_\text{prev}) - 2(x - x_\text{prev})) \tfrac{\eta^3}{h_\text{prev}^3}.
\mathcal{X}_\text{Hermite} = x + v \eta + (2 w - 3 \Delta x) \tfrac{\eta^2}{h_\text{prev}} + (w - 2 \Delta x) \tfrac{\eta^3}{h_\text{prev}^2},
\end{multline*}
where~$w = v + v_\text{prev}$ and~$\Delta x = h_\text{prev}^{-1} (x - x_\text{prev})$.
It is the unique polynomial with~$\mathcal{X}(0) = x$, $\mathcal{X}'(0) = v$, $\mathcal{X}(-h_\text{prev}) = x_\text{prev}$ and $\mathcal{X}'(-h_\text{prev}) = v_\text{prev}$.
This predictor, with order-3 Taylor models, gave us very good results, on which we report in Section~\ref{sec:experiments}.
Compared to uncertified method, the complexity of the predictor has a larger impact on the computation time: the predictor is not only used as a guess, we need to validate it.
So the balance between the complexity of the predictor and the number of iterations it saves does not favor high order methods.

By design of Algorithm~\ref{algo:track:fast}, from one Moore box $(x, r, A)$, at a given~$t$, to the next point~$x'$ at time~$t'$,
it is necessary that
$(I_n - A \cdot \ud F_{t'}(x'))\cdot B \subseteq \frac 78 B$.
This provides a theoretical maximum step length allowed by Moore's criterion,
independent of the quality of the predictor, or overestimation issues in interval arithmetic.
In experimentation, we observed that the Hermite predictor brings us very close to this theoretical limit.
So we do not expect that higher order predictors may improve performance.

\section{Experiments}
\label{sec:experiments}

\subsection{Implementation}

We propose a Rust implementation of Algorithm~\ref{algo:track:fast}, using the Hermite predictor.
Only fixed precision is implemented: all intervals have 64-bits floating-point endpoints,
and when the algorithm warns about precision, the computation is aborted.
Interval arithmetic is implemented using the AVX instruction set for the x86-64 platform,
following \textcite{Lambov_2008}. For example, the multiplication of two order-3 Taylor models,
that is 100 real interval number multiplications and 40 additions,
is performed with 638~SIMD instructions in less than 300 CPU cycles, according to the analysis tool \emph{llvm-mca}.
The source code is distributed under the GPLv3 license and available at
\begin{center}
  {\sffamily \url{https://gitlab.inria.fr/numag/algpath}}.
\end{center}

\subsection{Timings}

\begin{table*}[t]
  \centering
  \footnotesize
  \setlength{\tabcolsep}{2pt}\begin{tabular*}{\textwidth}{l@{\extracolsep{\fill}}rrrrrrrrrrrrrrrrrrrr}
\toprule
 &  &  &  & \multicolumn{2}{c}{circuit size} & \multicolumn{5}{c}{HomotopyContinuation.jl} & \multicolumn{5}{c}{algpath} & \multicolumn{5}{c}{Macaulay2} \\
    \cmidrule(lr){5-6} \cmidrule(lr){7-11} \cmidrule(lr){12-16} \cmidrule(lr){17-21}
\multicolumn{1}{c}{name} & \multicolumn{1}{c}{dim.} & \multicolumn{1}{c}{max deg} & \multicolumn{1}{c}{\# paths} & \multicolumn{1}{c}{$f$} & \multicolumn{1}{c}{$\mathrm{d}f$} & \multicolumn{1}{c}{fail.} & \multicolumn{1}{c}{med.} & \multicolumn{1}{c}{max.} & \multicolumn{1}{c}{ksteps/s } & \multicolumn{1}{c}{time (s)} & \multicolumn{1}{c}{fail.} & \multicolumn{1}{c}{med.} & \multicolumn{1}{c}{max.} & \multicolumn{1}{c}{ksteps/s } & \multicolumn{1}{c}{time (s)} & \multicolumn{1}{c}{fail.} & \multicolumn{1}{c}{med.} & \multicolumn{1}{c}{max.} & \multicolumn{1}{c}{ksteps/s } & \multicolumn{1}{c}{time (s)} \\
\midrule
dense & 1 & 10 & 10 & 88 & 96 &  & 6 & 10 & 30 & 1.8 &  & 11 & 31 & 55 & $<$ 0.1 &  & 629 & 2146 & 55 & 0.2 \\
dense & 1 & 20 & 20 & 168 & 202 &  & 13 & 23 & 53 & 1.8 &  & 29 & 134 & 42 & $<$ 0.1 &  & 40\,k & 183\,k & 46 & 20 \\
dense & 1 & 30 & 30 & 248 & 314 &  & 10 & 25 & 41 & 2.0 &  & 23 & 372 & 25 & $<$ 0.1 &  & 830\,k & 3478\,k & 30 & 18 min \\
dense & 1 & 40 & 40 & 328 & 416 &  & 14 & 30 & 45 & 2.0 &  & 34 & 197 & 24 & $<$ 0.1 & \multicolumn{5}{c}{$>$ 1 h} \\
dense & 1 & 50 & 50 & 408 & 520 &  & 12 & 61 & 37 & 1.9 &  & 30 & 5567 & 13 & 0.7 & \multicolumn{5}{c}{$>$ 1 h} \\
dense & 1 & 100 & 100 & 808 & 1054 &  & 13 & 51 & 23 & 1.9 &  & 38 & 5289 & 7.4 & 1.4 & \multicolumn{5}{c}{$>$ 1 h} \\
dense & 1 & 500 & 500 & 4008 & 5466 &  & 14 & 59 & 3.8 & 3.9 & 2 & 60 & 1121 & 2.3 & 17 & 500 &  &  &  & 4.0 \\
dense & 1 & 1000 & 1000 & 8008 & 10952 &  & 15 & 100 & 1.7 & 12 & 35 & 74 & 976 & 1.1 & 82 & 1000 &  &  &  & 29 \\
dense & 2 & 5 & 25 & 316 & 368 &  & 23 & 56 & 57 & 2.3 &  & 50 & 95 & 25 & $<$ 0.1 &  & 2850 & 6736 & 53 & 1.4 \\
dense & 2 & 10 & 100 & 1016 & 1280 &  & 22 & 74 & 33 & 2.6 &  & 53 & 307 & 9.2 & 0.7 &  & 33\,k & 301\,k & 28 & 158 \\
dense & 2 & 20 & 400 & 3616 & 4612 &  & 25 & 63 & 13 & 3.1 &  & 74 & 401 & 2.9 & 12 & \multicolumn{5}{c}{$>$ 1 h} \\
dense & 2 & 30 & 900 & 7816 & 9952 &  & 24 & 127 & 5.8 & 6.4 &  & 85 & 690 & 1.4 & 72 & \multicolumn{5}{c}{$>$ 1 h} \\
dense & 2 & 40 & 1600 & 13616 & 17284 &  & 25 & 95 & 3.4 & 14 &  & 100 & 998 & 0.81 & 268 & \multicolumn{5}{c}{$>$ 1 h} \\
dense & 2 & 50 & 2500 & 21016 & 26624 &  & 27 & 84 & 2.3 & 33 &  & 117 & 1675 & 0.53 & 12 min & \multicolumn{5}{c}{$>$ 1 h} \\
katsura & 5 & 2 & 16 & 192 & 98 &  & 49 & 74 & 41 & 3.8 &  & 74 & 136 & 26 & $<$ 0.1 &  & 3833 & 7903 & 38 & 1.9 \\
katsura & 7 & 2 & 64 & 310 & 158 &  & 59 & 99 & 59 & 3.9 &  & 100 & 203 & 15 & 0.5 &  & 5963 & 15\,k & 26 & 16 \\
katsura & 9 & 2 & 256 & 448 & 228 &  & 82 & 132 & 54 & 4.2 &  & 148 & 286 & 9.5 & 4.2 &  & 12\,k & 59\,k & 18 & 186 \\
katsura & 11 & 2 & 1024 & 606 & 308 &  & 100 & 179 & 41 & 6.7 &  & 177 & 359 & 6.3 & 30 &  & 21\,k & 88\,k & 13 & 30 min \\
katsura & 16 & 2 & 32768 & 1090 & 548 &  & 153 & 303 & 22 & 235 &  & 304 & 1847 & 2.7 & 1 h & \multicolumn{5}{c}{$>$ 50 h} \\
katsura & 21 & 2 & 1048576 & 1696 & 844 &  & 209 & 469 & 13 & 4 h & 483 & 427 & 8798 & 1.4 & 101 h & \multicolumn{5}{c}{not benchmarked} \\
katsura * & 26 & 2 & 100 & 2430 & 1202 &  & 305 & 466 & 6.9 & 8.8 & 1 & 800 & 2930 & 0.73 & 125 & \multicolumn{5}{c}{$>$ 1 h} \\
katsura * & 31 & 2 & 100 & 3286 & 1614 &  & 382 & 538 & 4.9 & 12 & 1 & 852 & 5021 & 0.47 & 219 & \multicolumn{5}{c}{$>$ 1 h} \\
katsura * & 41 & 2 & 100 & 5376 & 2618 &  & 554 & 787 & 2.7 & 24 & 9 & 1371 & 5182 & 0.19 & 13 min & \multicolumn{5}{c}{$>$ 1 h} \\
dense * & 4 & 3 & 100 & 1080 & 1318 &  & 39 & 67 & 41 & 2.4 &  & 66 & 127 & 8.3 & 0.9 &  & 3384 & 9936 & 35 & 10 \\
dense * & 6 & 3 & 100 & 4092 & 5384 &  & 54 & 96 & 9.0 & 3.3 &  & 112 & 224 & 2.3 & 5.1 &  & 11\,k & 24\,k & 18 & 62 \\
dense * & 8 & 3 & 100 & 11120 & 15242 &  & 73 & 124 & 2.1 & 6.3 &  & 157 & 354 & 0.86 & 19 &  & 21\,k & 74\,k & 9.5 & 243 \\
structured * & 4 & 3 & 100 & 244 & 418 &  & 40 & 78 & 92 & 4.0 &  & 75 & 199 & 24 & 0.4 &  & 4531 & 8925 & 41 & 11 \\
structured * & 6 & 3 & 100 & 426 & 778 &  & 66 & 101 & 59 & 3.9 &  & 130 & 254 & 13 & 1.1 &  & 18\,k & 61\,k & 23 & 85 \\
structured * & 8 & 3 & 100 & 670 & 1252 &  & 81 & 121 & 40 & 3.9 &  & 182 & 283 & 7.9 & 2.3 &  & 36\,k & 97\,k & 13 & 305 \\
structured \textsuperscript{N} & 5 & 5 & 1 & 302 & 545 &  & 42 & 42 & 4.9 & 3.1 &  & 99 & 99 & 18 & $<$ 0.1 &  & 252\,k & 252\,k & 12 & 22 \\
structured \textsuperscript{N} & 10 & 10 & 1 & 1034 & 2024 &  & 53 & 53 & 0.18 & 3.1 &  & 123 & 123 & 4.9 & $<$ 0.1 & \multicolumn{5}{c}{$>$ 1 h} \\
structured \textsuperscript{N} & 15 & 15 & 1 & 2366 & 5079 & \multicolumn{5}{c}{$>$ 8 GB} &  & 628 & 628 & 2.0 & 0.4 & \multicolumn{5}{c}{$>$ 8 GB} \\
structured \textsuperscript{N} & 20 & 20 & 1 & 3554 & 6721 & \multicolumn{5}{c}{$>$ 8 GB} &  & 1591 & 1591 & 1.2 & 1.5 & \multicolumn{5}{c}{$>$ 8 GB} \\
structured \textsuperscript{N} & 25 & 25 & 1 & 5466 & 10541 & \multicolumn{5}{c}{$>$ 8 GB} &  & 1734 & 1734 & 0.69 & 2.9 & \multicolumn{5}{c}{$>$ 8 GB} \\
structured \textsuperscript{N} & 30 & 30 & 1 & 7788 & 15239 & \multicolumn{5}{c}{$>$ 8 GB} &  & 1989 & 1989 & 0.43 & 5.2 & \multicolumn{5}{c}{$>$ 8 GB} \\
dense \textsuperscript{N} & 4 & 3 & 1 & 792 & 1038 &  & 18 & 18 & 0.71 & 2.5 &  & 50 & 50 & 12 & $<$ 0.1 &  & 21\,k & 21\,k & 27 & 0.8 \\
dense \textsuperscript{N} & 6 & 3 & 1 & 3072 & 4376 &  & 33 & 33 & 0.12 & 2.7 &  & 90 & 90 & 3.6 & 0.1 &  & 22\,k & 22\,k & 13 & 1.8 \\
dense \textsuperscript{N} & 8 & 3 & 1 & 8464 & 12602 &  & 10 & 10 & $<$ 0.01 & 5.0 &  & 35 & 35 & 1.1 & 0.4 &  & 8775 & 8775 & 5.5 & 1.6 \\
structured \textsuperscript{N} & 4 & 3 & 1 & 200 & 296 &  & 32 & 32 & 4.9 & 3.2 &  & 66 & 66 & 28 & $<$ 0.1 &  & 8559 & 8559 & 34 & 0.3 \\
structured \textsuperscript{N} & 6 & 3 & 1 & 350 & 516 &  & 32 & 32 & 2.9 & 2.9 &  & 79 & 79 & 15 & $<$ 0.1 &  & 31\,k & 31\,k & 19 & 1.7 \\
structured \textsuperscript{N} & 8 & 3 & 1 & 566 & 876 &  & 21 & 21 & 1.0 & 2.9 &  & 73 & 73 & 8.8 & $<$ 0.1 &  & 19\,k & 19\,k & 8.0 & 2.3 \\
\bottomrule
\end{tabular*}

  \bigskip
  \caption{Comparison of \emph{HomotopyContinuation.jl}, \emph{algpath} (this work) and \emph{Macaulay2}.
    \emph{Dim.}: number of variables;
    \emph{deg.}: maximum degree of the equations;
    \emph{\#~paths}: number of paths to track;
    \emph{circuit size}: size of the circuit evaluating the parametric system and its derivative;
    \emph{fail.}: number of reported failures;
    \emph{med.}: median number of iterations over all paths;
    \emph{max.}: maximum number of iterations in a single path;
    \emph{ksteps/s}: number of steps per second (thousands);
    \emph{time}: total time to track all the paths.\quad
    *\enskip A hundred randomly picked starting zeros of a total degree homotopy.\quad
    \textsuperscript{N}\enskip Newton homotopy.}
  \vspace{-.5cm}
  \label{table:bench}
\end{table*}

We compared our implementation \emph{algpath} with the Macaulay2 package \emph{NumericalAlgebraicGeometry} \parencite{BeltranLeykin_2012}.
As a state-of-the-art implementation of noncertified path tracking,
we also benchmarked the Julia package \emph{HomotopyContinuation.jl} \parencite{BreidingTimme_2018}.
We used a computer with an Intel Xeon E3-1220v3 CPU and 16GB of RAM. Path tracking algorithms uses very little memory, but we report some out-of-memory errors from Julia (to be investigated)
and Macaulay2 (which tries to expand structured polynomials).
The benchmarking data, scripts and raw results are available at
\begin{center}
  {\sffamily\url{https://gitlab.inria.fr/numag/algpath-bench}}.
\end{center}

\subsubsection{Data set}
We considered linear homotopies, $F_t(x) = tf(x) + (1 - t)g (x)$ between a start polynomial system $g$ and a target system $f$.
For the target, we considered several families. First, dense systems with random standard independent complex coefficients.
Second, structured random systems, with low Waring rank, with components of the form $\pm 1 + \smash{\sum_{i = 1}^5 \big( \sum_{j = 1}^n a_{i, j}x_j \big)^d}$
with random coefficients $a_{i, j}$, independent and uniformly distributed in $\{-1, 0, 1\}$.
Third, we considered the classical benchmark family Katsura~$n$ (available in Sagemath with~\texttt{sage.rings.ideal.Katsura}),
which is a polynomial system in~$n+1$ variables with~$2^n$ solutions.

For the start system, we considered total degree homotopies, with~$g_i(x) = \gamma_i (x_i^d - 1)$
and~$\gamma_i \in \mathbb{C}$ random,
as well as Newton homotopies, with~$g(x) = f(x) - f(x_0)$ for some random~$x_0$.

\subsubsection{Results analysis}
Table~\ref{table:bench} shows that the number of steps per second performed by \emph{Macaulay2} is comparable to that of \emph{HomotopyContinuation.jl}\footnotemark. By strongly decreasing the number of steps required to track a path compared to \emph{Macaulay2}, this work is able to solve problems of a much bigger scale. The gap between certified and noncertified methods is significantly narrowed.
\footnotetext{Rigorous benchmarking of Julia code is difficult because of run-time compilation (JIT).
Following common practice, we run twice \texttt{HomopotopyContinuation.solve} with exactly the same arguments.
The first run suffers from compilation overheads, while the second does not.
However, some compilation overheads are input dependent.
For fair comparison, it makes no sense to only time the second run (because it is too easy to compute something faster after we did it once).
So the \emph{total time} in Table~\ref{table:bench} is the time of the first run. To lessen the compilation overheads in this first run,
we perform a warmup run with a polynomial system of degree~1 and same dimension as the input system.
In constrast, the \emph{number of steps per second} is obtained from the time of the second run, divided by the total number of iterations.}

Comparing the median number of steps performed by \emph{HC.jl} and \emph{algpath}
on each problem suggests that, typically, the latter performs only 2 or~3 times more steps.
Figure~\ref{fig:steps} inspects this relation more precisely.
The correlation between the number of steps of \emph{HC.jl} and \emph{algpath} is strong on some examples
(such as random dense or structured dimension-8 degree-3 systems), but weaker on some others (difficult paths in the degree~500 univariate polynomial, or Katsura examples).

Finally, we reach the limits of 64-bits floating-point arithmetic sooner than \emph{HomotopyContinuation.jl}, as shown by the number of failures for high degree univariate instances, or Katsura-40.
Note that we did not check systematically the absence of path swapping in \emph{HC.jl}, there may be silent failures.

\begin{acks}
  We are grateful to Florent Bréhard, and Mioara Joldes for
  useful discussions, and to the referees for thoughtful reports.
  This work has been supported by the \grantsponsor{anr}{Agence nationale de la
    recher\-che (ANR)}{https://anr.fr}, grant agreement
  \grantnum{anr}{ANR-19-CE40-0018} (De Rerum Natura); and by the
  \grantsponsor{erc}{European Research Council (ERC)}{https://erc.europa.eu}
  under the European Union’s Horizon Europe research and innovation programme,
  grant agreement \grantnum{erc}{101040794} (10000~DIGITS).
\end{acks}

\begin{figure*}[tb]
  \def\markpenta{\tikz \draw[fill={rgb,1:red,0.1725;green,0.6275;blue,0.1725}] plot[mark=pentagon*] coordinates {(0,0)};}
  \def\markutriangle{\tikz \draw[fill={rgb,1:red,0.1216;green,0.4667;blue,0.7059}] plot[mark=triangle*] coordinates {(0,0)};}
  \def\markcircle{\tikz \draw[fill={rgb,1:red,0.6824;green,0.7804;blue,0.9098}] plot[mark=*] coordinates {(0,0)};}
  \def\markdtriangle{\tikz \draw[fill={rgb,1:red,1.0;green,0.7333;blue,0.4706}] plot[mark=triangle*, mark options={rotate=180}] coordinates {(0,0)};}
  \def\markdiamond{\tikz \draw[fill={rgb,1:red,1.0;green,0.498;blue,0.0549}] plot[mark=diamond*] coordinates {(0,0)};}
  \centering
  \input{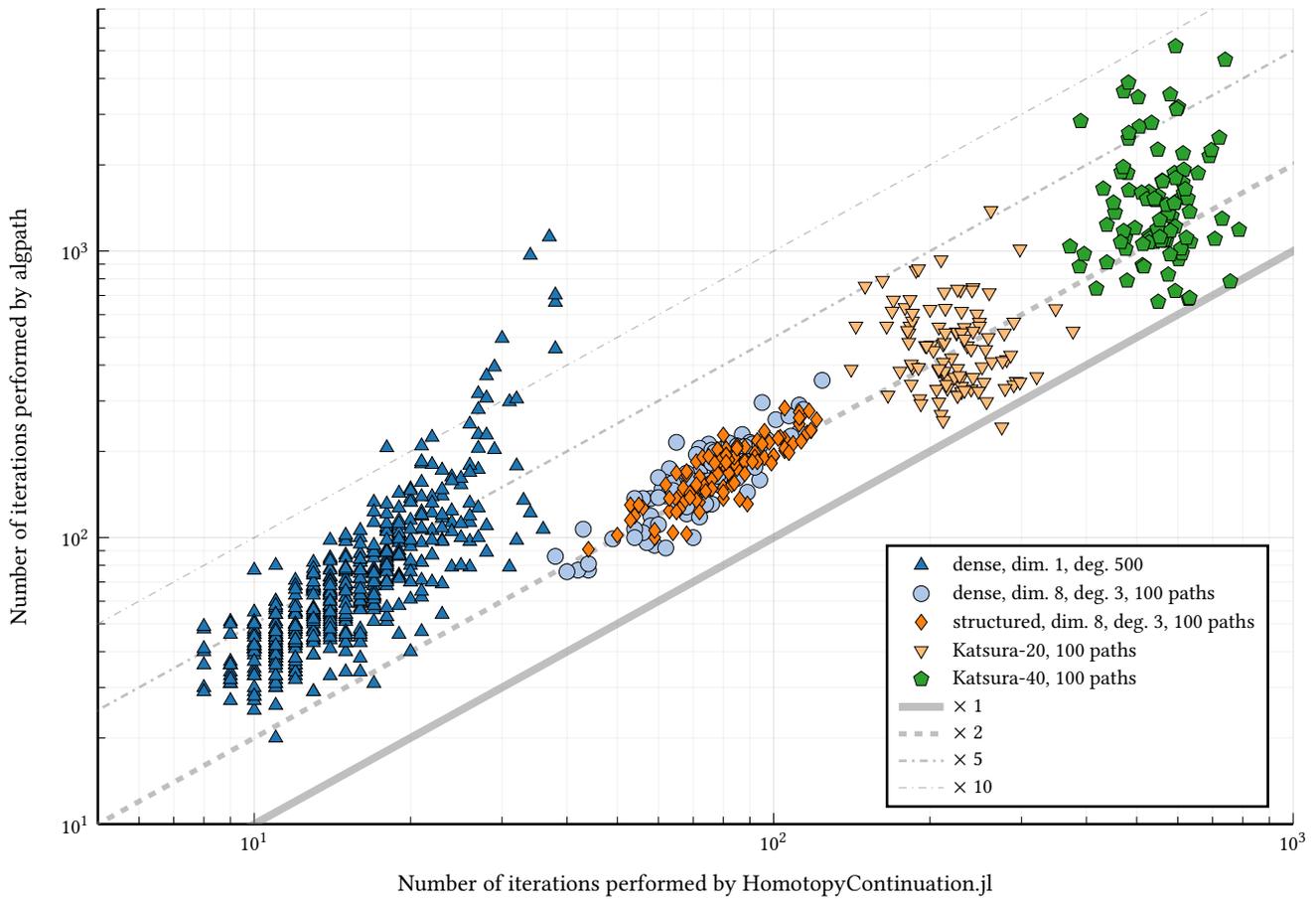}
  \vspace{-.7cm}
  \caption{Number of iterations performed by \emph{algpath} (this work) and \emph{HomotopyContinuation.jl} (noncertified path tracking)
    in four path tracking problems. We observe that \emph{algpath} performs typically no more than 5 times more iterations
    than a state-of-the-art noncertified numerical solver. The ratio is close to~2 on well-conditioned examples (\protect\markcircle\ and \protect\markdiamond)
    but there is much more variability on poor conditioning (\protect\markutriangle, \protect\markdtriangle\  and \protect\markpenta).
  }
  \label{fig:steps}
\end{figure*}

% \vfill
% \pagebreak

% \bibliographystyle{ACM-Reference-Format}
% %\bibliography{path-continuation_biber}
% \bibliography{maths-bibtex}

\begin{thebibliography}{33}

%%% ====================================================================
%%% NOTE TO THE USER: you can override these defaults by providing
%%% customized versions of any of these macros before the \bibliography
%%% command.  Each of them MUST provide its own final punctuation,
%%% except for \shownote{}, \showDOI{}, and \showURL{}.  The latter two
%%% do not use final punctuation, in order to avoid confusing it with
%%% the Web address.
%%%
%%% To suppress output of a particular field, define its macro to expand
%%% to an empty string, or better, \unskip, like this:
%%%
%%% \newcommand{\showDOI}[1]{\unskip}   % LaTeX syntax
%%%
%%% \def \showDOI #1{\unskip}           % plain TeX syntax
%%%
%%% ====================================================================

\ifx \showCODEN    \undefined \def \showCODEN     #1{\unskip}     \fi
\ifx \showDOI      \undefined \def \showDOI       #1{#1}\fi
\ifx \showISBNx    \undefined \def \showISBNx     #1{\unskip}     \fi
\ifx \showISBNxiii \undefined \def \showISBNxiii  #1{\unskip}     \fi
\ifx \showISSN     \undefined \def \showISSN      #1{\unskip}     \fi
\ifx \showLCCN     \undefined \def \showLCCN      #1{\unskip}     \fi
\ifx \shownote     \undefined \def \shownote      #1{#1}          \fi
\ifx \showarticletitle \undefined \def \showarticletitle #1{#1}   \fi
\ifx \showURL      \undefined \def \showURL       {\relax}        \fi
% The following commands are used for tagged output and should be
% invisible to TeX
\providecommand\bibfield[2]{#2}
\providecommand\bibinfo[2]{#2}
\providecommand\natexlab[1]{#1}
\providecommand\showeprint[2][]{arXiv:#2}

\bibitem[Alefeld and Mayer(2000)]%
        {AlefeldMayer_2000}
\bibfield{author}{\bibinfo{person}{G.~Alefeld} {and}
  \bibinfo{person}{G.~Mayer}.} \bibinfo{year}{2000}\natexlab{}.
\newblock \showarticletitle{Interval Analysis: Theory and Applications}.
\newblock \bibinfo{journal}{\emph{J. Comput. Appl. Math.}}
  \bibinfo{volume}{121}, \bibinfo{number}{1} (\bibinfo{year}{2000}),
  \bibinfo{pages}{421--464}.
\newblock


\bibitem[Bates et~al\mbox{.}(2013)]%
        {BatesHauensteinSommeseWampler_2013}
\bibfield{author}{\bibinfo{person}{D.~J. Bates}, \bibinfo{person}{J.~D.
  Hauenstein}, \bibinfo{person}{A.~J. Sommese}, {and} \bibinfo{person}{C.~W.
  Wampler}.} \bibinfo{year}{2013}\natexlab{}.
\newblock \bibinfo{booktitle}{\emph{Numerically Solving Polynomial Systems with
  {{Bertini}}}}. \bibinfo{series}{Software, {{Environments}}, and {{Tools}}},
  Vol.~\bibinfo{volume}{25}.
\newblock \bibinfo{publisher}{SIAM, Philadelphia, PA}.
\newblock
\showISBNx{978-1-61197-269-6}


\bibitem[Beltr{\'a}n and Leykin(2012)]%
        {BeltranLeykin_2012}
\bibfield{author}{\bibinfo{person}{C.~Beltr{\'a}n} {and}
  \bibinfo{person}{A.~Leykin}.} \bibinfo{year}{2012}\natexlab{}.
\newblock \showarticletitle{Certified Numerical Homotopy Tracking}.
\newblock \bibinfo{journal}{\emph{Exp. Math.}} \bibinfo{volume}{21},
  \bibinfo{number}{1} (\bibinfo{year}{2012}), \bibinfo{pages}{69--83}.
\newblock
\showISSN{1058-6458}
\urldef\tempurl%
\url{https://doi.org/10/ggck73}
\showDOI{\tempurl}


\bibitem[Beltr{\'a}n and Leykin(2013)]%
        {BeltranLeykin_2013}
\bibfield{author}{\bibinfo{person}{C.~Beltr{\'a}n} {and}
  \bibinfo{person}{A.~Leykin}.} \bibinfo{year}{2013}\natexlab{}.
\newblock \showarticletitle{Robust Certified Numerical Homotopy Tracking}.
\newblock \bibinfo{journal}{\emph{Found. Comput. Math.}} \bibinfo{volume}{13},
  \bibinfo{number}{2} (\bibinfo{year}{2013}), \bibinfo{pages}{253--295}.
\newblock
\showISSN{1615-3383}
\urldef\tempurl%
\url{https://doi.org/10/ggck74}
\showDOI{\tempurl}


\bibitem[Berz and Hoffst{\"a}tter(1998)]%
        {BerzHoffstatter_1998}
\bibfield{author}{\bibinfo{person}{M.~Berz} {and}
  \bibinfo{person}{G.~Hoffst{\"a}tter}.} \bibinfo{year}{1998}\natexlab{}.
\newblock \showarticletitle{Computation and Application of {{Taylor}}
  Polynomials with Interval Remainder Bounds}.
\newblock \bibinfo{journal}{\emph{Reliab. Comput.}} \bibinfo{volume}{4},
  \bibinfo{number}{1} (\bibinfo{year}{1998}), \bibinfo{pages}{83--97}.
\newblock
\showISSN{1573-1340}
\urldef\tempurl%
\url{https://doi.org/bqmsdm}
\showDOI{\tempurl}


\bibitem[Breiding and Timme(2018)]%
        {BreidingTimme_2018}
\bibfield{author}{\bibinfo{person}{P.~Breiding} {and}
  \bibinfo{person}{S.~Timme}.} \bibinfo{year}{2018}\natexlab{}.
\newblock \showarticletitle{{{HomotopyContinuation}}.Jl: {{A}} Package for
  Homotopy Continuation in Julia}. In \bibinfo{booktitle}{\emph{Int. {{Congr}}.
  {{Math}}. {{Softw}}.}} \bibinfo{pages}{458--465}.
\newblock
\urldef\tempurl%
\url{https://doi.org/10/ggck7q}
\showDOI{\tempurl}


\bibitem[B{\"u}rgisser(2000)]%
        {Burgisser_2000}
\bibfield{author}{\bibinfo{person}{P.~B{\"u}rgisser}.}
  \bibinfo{year}{2000}\natexlab{}.
\newblock \bibinfo{booktitle}{\emph{Completeness and Reduction in Algebraic
  Complexity Theory}}.
\newblock \bibinfo{publisher}{Springer-Verlag}.
\newblock
\showISBNx{978-3-540-66752-0}
\urldef\tempurl%
\url{https://doi.org/10/d9n4}
\showDOI{\tempurl}


\bibitem[Cucker(2021)]%
        {Cucker_2021}
\bibfield{author}{\bibinfo{person}{F.~Cucker}.}
  \bibinfo{year}{2021}\natexlab{}.
\newblock \showarticletitle{Smale 17th Problem: Advances and Open Directions}.
\newblock \bibinfo{journal}{\emph{N. Z. J. Math.}}  \bibinfo{volume}{52}
  (\bibinfo{year}{2021}), \bibinfo{pages}{233--257}.
\newblock
\showISSN{1179-4984}
\urldef\tempurl%
\url{https://doi.org/gtc2rc}
\showDOI{\tempurl}


\bibitem[Duff and Lee(2024)]%
        {DuffLee_2024}
\bibfield{author}{\bibinfo{person}{T.~Duff} {and} \bibinfo{person}{K.~Lee}.}
  \bibinfo{year}{2024}\natexlab{}.
\newblock \bibinfo{title}{Certified Homotopy Tracking Using the {{Krawczyk}}
  Method}.
\newblock
\newblock
\showeprint[arxiv]{2402.07053}


\bibitem[Hauenstein et~al\mbox{.}(2014)]%
        {HauensteinHaywoodLiddell_2014}
\bibfield{author}{\bibinfo{person}{J.~D. Hauenstein},
  \bibinfo{person}{I.~Haywood}, {and} \bibinfo{person}{A.~C. Liddell, Jr.}}
  \bibinfo{year}{2014}\natexlab{}.
\newblock \showarticletitle{An a Posteriori Certification Algorithm for
  {{Newton}} Homotopies}. In \bibinfo{booktitle}{\emph{Proc. {{ISSAC}} 2014}}. \bibinfo{publisher}{ACM},
  \bibinfo{pages}{248--255}.
\newblock
\showISBNx{978-1-4503-2501-1}
\urldef\tempurl%
\url{https://doi.org/10/ggck7h}
\showDOI{\tempurl}


\bibitem[Hauenstein and Liddell(2016)]%
        {HauensteinLiddell_2016}
\bibfield{author}{\bibinfo{person}{J.~D. Hauenstein} {and}
  \bibinfo{person}{A.~C. Liddell}.} \bibinfo{year}{2016}\natexlab{}.
\newblock \showarticletitle{Certified Predictor--Corrector Tracking for
  {{Newton}} Homotopies}.
\newblock \bibinfo{journal}{\emph{J. Symb. Comput.}}  \bibinfo{volume}{74}
  (\bibinfo{year}{2016}), \bibinfo{pages}{239--254}.
\newblock
\showISSN{0747-7171}
\urldef\tempurl%
\url{https://doi.org/10/ggck7j}
\showDOI{\tempurl}


\bibitem[Hauenstein et~al\mbox{.}(2017)]%
        {HauensteinRodriguezSottile_2017}
\bibfield{author}{\bibinfo{person}{J.~D. Hauenstein}, \bibinfo{person}{J.~I.
  Rodriguez}, {and} \bibinfo{person}{F.~Sottile}.}
  \bibinfo{year}{2017}\natexlab{}.
\newblock \showarticletitle{Numerical Computation of {{Galois}} Groups}.
\newblock \bibinfo{journal}{\emph{Found. Comput. Math.}}
  (\bibinfo{year}{2017}), \bibinfo{pages}{1--24}.
\newblock
\showISSN{1615-3375, 1615-3383}
\urldef\tempurl%
\url{https://doi.org/gd2rw6}
\showDOI{\tempurl}


\bibitem[Johansson(2017)]%
        {Johansson_2017}
\bibfield{author}{\bibinfo{person}{F.~Johansson}.}
  \bibinfo{year}{2017}\natexlab{}.
\newblock \showarticletitle{Arb: Efficient Arbitrary-Precision Midpoint-Radius
  Interval Arithmetic}.
\newblock \bibinfo{journal}{\emph{IEEE Trans. Comput.}} \bibinfo{volume}{66},
  \bibinfo{number}{8} (\bibinfo{year}{2017}), \bibinfo{pages}{1281--1292}.
\newblock
\urldef\tempurl%
\url{https://doi.org/10/gbn9sm}
\showDOI{\tempurl}


\bibitem[Joldes(2011)]%
        {Joldes_2011}
\bibfield{author}{\bibinfo{person}{M.~Joldes}.}
  \bibinfo{year}{2011}\natexlab{}.
\newblock \emph{\bibinfo{title}{Rigorous Polynomial Approximations and
  Applications}}.
\newblock \bibinfo{thesistype}{Ph.\,D. Dissertation}.
  \bibinfo{school}{{\'E}cole normale sup{\'e}rieure de lyon}.
\newblock
\urldef\tempurl%
\url{https://theses.hal.science/tel-00657843}
\showURL{%
\tempurl}


\bibitem[Kranich(2015)]%
        {Kranich_2015}
\bibfield{author}{\bibinfo{person}{S.~Kranich}.}
  \bibinfo{year}{2015}\natexlab{}.
\newblock \bibinfo{title}{An Epsilon-Delta Bound for Plane Algebraic Curves and
  Its Use for Certified Homotopy Continuation of Systems of Plane Algebraic
  Curves}.
\newblock
\newblock
\showeprint[arxiv]{1505.03432}


\bibitem[Krawczyk(1969)]%
        {Krawczyk_1969}
\bibfield{author}{\bibinfo{person}{R.~Krawczyk}.}
  \bibinfo{year}{1969}\natexlab{}.
\newblock \showarticletitle{{Newton-Algorithmen zur Bestimmung von Nullstellen
  mit Fehlerschranken}}.
\newblock \bibinfo{journal}{\emph{Computing}} \bibinfo{volume}{4},
  \bibinfo{number}{3} (\bibinfo{year}{1969}), \bibinfo{pages}{187--201}.
\newblock
\showISSN{1436-5057}
\urldef\tempurl%
\url{https://doi.org/10/css7z9}
\showDOI{\tempurl}


\bibitem[Lambov(2008)]%
        {Lambov_2008}
\bibfield{author}{\bibinfo{person}{B.~Lambov}.}
  \bibinfo{year}{2008}\natexlab{}.
\newblock \showarticletitle{Interval {{Arithmetic Using SSE-2}}}. In
  \bibinfo{booktitle}{\emph{Reliab. {{Implement}}. {{Real Number Algorithms}}}}
  \emph{(\bibinfo{series}{Lecture {{Notes}} in {{Computer Science}}})},
  \bibfield{editor}{\bibinfo{person}{P.~Hertling},
  \bibinfo{person}{C.~M. Hoffmann}, \bibinfo{person}{W.~Luther},
  {and} \bibinfo{person}{N.~Revol}} (Eds.).
  \bibinfo{publisher}{Springer},
  \bibinfo{pages}{102--113}.
\newblock
\showISBNx{978-3-540-85521-7}
\urldef\tempurl%
\url{https://doi.org/c7vvrk}
\showDOI{\tempurl}


\bibitem[Lang(1997)]%
        {Lang_1997}
\bibfield{author}{\bibinfo{person}{S.~Lang}.} \bibinfo{year}{1997}\natexlab{}.
\newblock \bibinfo{booktitle}{\emph{Undergraduate Analysis}
  (\bibinfo{edition}{2} ed.)}.
\newblock \bibinfo{publisher}{Springer}.
\newblock
\showISBNx{978-1-4419-2853-5 978-1-4757-2698-5}
\urldef\tempurl%
\url{https://doi.org/gtcznr}
\showDOI{\tempurl}


\bibitem[Leykin(2011)]%
        {Leykin_2011}
\bibfield{author}{\bibinfo{person}{A.~Leykin}.}
  \bibinfo{year}{2011}\natexlab{}.
\newblock \showarticletitle{Numerical Algebraic Geometry}.
\newblock \bibinfo{journal}{\emph{J. Softw. Algebra Geom.}}
  \bibinfo{volume}{3}, \bibinfo{number}{1} (\bibinfo{year}{2011}),
  \bibinfo{pages}{5--10}.
\newblock
\showISSN{1948-7916, 1948-7916}
\urldef\tempurl%
\url{https://doi.org/ggck9r}
\showDOI{\tempurl}


\bibitem[{Marco-Buzunariz} and Rodr{\'i}guez(2016)]%
        {MarcoBuzunarizRodriguez_2016}
\bibfield{author}{\bibinfo{person}{M.~{\'A}. {Marco-Buzunariz}} {and}
  \bibinfo{person}{M.~Rodr{\'i}guez}.} \bibinfo{year}{2016}\natexlab{}.
\newblock \showarticletitle{{{SIROCCO}}: {{A}} Library for Certified Polynomial
  Root Continuation}. In \bibinfo{booktitle}{\emph{Proc. {{ICMS}} 2016}}
  \emph{(\bibinfo{series}{{{LNCS}}})}. \bibinfo{publisher}{Springer},
  \bibinfo{pages}{191--197}.
\newblock
\showISBNx{978-3-319-42432-3}
\urldef\tempurl%
\url{https://doi.org/10/grqk32}
\showDOI{\tempurl}


\bibitem[Moore(1977)]%
        {Moore_1977}
\bibfield{author}{\bibinfo{person}{R.~E. Moore}.}
  \bibinfo{year}{1977}\natexlab{}.
\newblock \showarticletitle{A Test for Existence of Solutions to Nonlinear
  Systems}.
\newblock \bibinfo{journal}{\emph{SIAM J. Numer. Anal.}} \bibinfo{volume}{14},
  \bibinfo{number}{4} (\bibinfo{year}{1977}), \bibinfo{pages}{611--615}.
\newblock
\showISSN{0036-1429}
\urldef\tempurl%
\url{https://doi.org/10/c66n76}
\showDOI{\tempurl}
\showeprint[jstor]{2156481}


\bibitem[Moore et~al\mbox{.}(2009)]%
        {MooreKearfottCloud_2009}
\bibfield{author}{\bibinfo{person}{R.~E. Moore}, \bibinfo{person}{R.~B.
  Kearfott}, {and} \bibinfo{person}{M.~J. Cloud}.}
  \bibinfo{year}{2009}\natexlab{}.
\newblock \bibinfo{booktitle}{\emph{Introduction to Interval Analysis}}.
\newblock \bibinfo{publisher}{SIAM}.
\newblock
\showISBNx{978-0-89871-669-6 978-0-89871-771-6}
\urldef\tempurl%
\url{https://doi.org/c8ctwd}
\showDOI{\tempurl}


\bibitem[Muller et~al\mbox{.}(2018)]%
        {MullerBrunieDeDinechinJeannerodJoldesEtAl_2018}
\bibfield{author}{\bibinfo{person}{J.-M. Muller}, \bibinfo{person}{N.~Brunie},
  \bibinfo{person}{F.~De~Dinechin}, \bibinfo{person}{C.-P. Jeannerod},
  \bibinfo{person}{M.~Joldes}, \bibinfo{person}{V.~Lef{\`e}vre},
  \bibinfo{person}{G.~Melquiond}, \bibinfo{person}{N.~Revol}, {and}
  \bibinfo{person}{S.~Torres}.} \bibinfo{year}{2018}\natexlab{}.
\newblock \bibinfo{booktitle}{\emph{Handbook of Floating-Point Arithmetic}
  (\bibinfo{edition}{2} ed.)}.
\newblock \bibinfo{publisher}{Springer}.
\newblock
\showISBNx{978-3-319-76525-9 978-3-319-76526-6}
\urldef\tempurl%
\url{https://doi.org/gtdkwj}
\showDOI{\tempurl}


\bibitem[Neumaier(2003)]%
        {Neumaier_2003}
\bibfield{author}{\bibinfo{person}{A.~Neumaier}.}
  \bibinfo{year}{2003}\natexlab{}.
\newblock \showarticletitle{Taylor Forms---{{Use}} and Limits}.
\newblock \bibinfo{journal}{\emph{Reliable Computing}} \bibinfo{volume}{9},
  \bibinfo{number}{1} (\bibinfo{year}{2003}), \bibinfo{pages}{43--79}.
\newblock
\showISSN{1573-1340}
\urldef\tempurl%
\url{https://doi.org/c48bk4}
\showDOI{\tempurl}


\bibitem[Revol and Rouillier(2023)]%
        {RevolRouillier_1999}
\bibfield{author}{\bibinfo{person}{N.~Revol} {and}
  \bibinfo{person}{F.~Rouillier}.} \bibinfo{year}{1999/2023}\natexlab{}.
\newblock \bibinfo{title}{{{MPFI}}}.
\newblock
\newblock
\urldef\tempurl%
\url{https://gitlab.inria.fr/mpfi/mpfi}
\showURL{%
\tempurl}


\bibitem[Rodriguez and Wang(2017)]%
        {RodriguezWang_2017}
\bibfield{author}{\bibinfo{person}{J.~I. Rodriguez} {and}
  \bibinfo{person}{B.~Wang}.} \bibinfo{year}{2017}\natexlab{}.
\newblock \bibinfo{title}{Numerical Computation of Braid Groups}.
\newblock
\newblock
\showeprint[arxiv]{1711.07947}


\bibitem[Rump(1983)]%
        {Rump_1983}
\bibfield{author}{\bibinfo{person}{S.~M. Rump}.}
  \bibinfo{year}{1983}\natexlab{}.
\newblock \showarticletitle{Solving Algebraic Problems with High Accuracy}.
\newblock In \bibinfo{booktitle}{\emph{A {{New Approach}} to {{Scientific
  Computation}}}}, \bibfield{editor}{\bibinfo{person}{U.~W. Kulisch} {and}
  \bibinfo{person}{W.~L. Miranker}} (Eds.). \bibinfo{publisher}{Academic
  Press}, \bibinfo{pages}{51--120}.
\newblock
\showISBNx{978-0-12-428660-3}
\urldef\tempurl%
\url{https://doi.org/10/kh8k}
\showDOI{\tempurl}


\bibitem[Sommese et~al\mbox{.}(2001)]%
        {SommeseVerscheldeWampler_2001}
\bibfield{author}{\bibinfo{person}{A.~J. Sommese},
  \bibinfo{person}{J.~Verschelde}, {and} \bibinfo{person}{C.~W. Wampler}.}
  \bibinfo{year}{2001}\natexlab{}.
\newblock \showarticletitle{Numerical Decomposition of the Solution Sets of
  Polynomial Systems into Irreducible Components}.
\newblock \bibinfo{journal}{\emph{SIAM J. Numer. Anal.}} \bibinfo{volume}{38},
  \bibinfo{number}{6} (\bibinfo{year}{2001}), \bibinfo{pages}{2022--2046}.
\newblock
\showISSN{0036-1429, 1095-7170}
\urldef\tempurl%
\url{https://doi.org/10/fv5jzk}
\showDOI{\tempurl}


\bibitem[Sommese et~al\mbox{.}(2005)]%
        {SommeseVerscheldeWampler_2005}
\bibfield{author}{\bibinfo{person}{A.~J. Sommese},
  \bibinfo{person}{J.~Verschelde}, {and} \bibinfo{person}{C.~W. Wampler}.}
  \bibinfo{year}{2005}\natexlab{}.
\newblock \showarticletitle{Introduction to Numerical Algebraic Geometry}.
\newblock In \bibinfo{booktitle}{\emph{Solving {{Polynomial Equations}}}},
  \bibfield{editor}{\bibinfo{person}{M.~Bronstein},
  \bibinfo{person}{A.~M. Cohen}, \bibinfo{person}{H.~Cohen},
  \bibinfo{person}{D.~Eisenbud}, \bibinfo{person}{B.~Sturmfels},
  \bibinfo{person}{A.~Dickenstein}, {and} \bibinfo{person}{I.~Z.
    Emiris}} (Eds.). \bibinfo{publisher}{Springer},
  \bibinfo{pages}{301--337}.
\newblock
\showISBNx{978-3-540-27357-8}
\urldef\tempurl%
\url{https://doi.org/10/bzsc24}
\showDOI{\tempurl}


\bibitem[Telen et~al\mbox{.}(2020)]%
        {TelenVanBarelVerschelde_2020}
\bibfield{author}{\bibinfo{person}{S.~Telen}, \bibinfo{person}{M.~Van~Barel},
  {and} \bibinfo{person}{J.~Verschelde}.} \bibinfo{year}{2020}\natexlab{}.
\newblock \showarticletitle{A Robust Numerical Path Tracking Algorithm for
  Polynomial Homotopy Continuation}.
\newblock \bibinfo{journal}{\emph{SIAM J. Sci. Comput.}} \bibinfo{volume}{42},
  \bibinfo{number}{6} (\bibinfo{year}{2020}), \bibinfo{pages}{A3610--A3637}.
\newblock
\showISSN{1064-8275}
\urldef\tempurl%
\url{https://doi.org/10/grqm6n}
\showDOI{\tempurl}


\bibitem[{van der Hoeven}(2015)]%
        {VanderHoeven_2015}
\bibfield{author}{\bibinfo{person}{J.~{van der Hoeven}}.}
  \bibinfo{year}{2015}\natexlab{}.
\newblock \bibinfo{title}{Reliable Homotopy Continuation}.
\newblock
\newblock
\urldef\tempurl%
\url{https://hal.science/hal-00589948v4}
\showURL{%
\tempurl}


\bibitem[Verschelde(1999)]%
        {Verschelde_1999}
\bibfield{author}{\bibinfo{person}{J.~Verschelde}.}
  \bibinfo{year}{1999}\natexlab{}.
\newblock \showarticletitle{Algorithm 795: {{PHCpack}}: {{A}} General-Purpose
  Solver for Polynomial Systems by Homotopy Continuation}.
\newblock \bibinfo{journal}{\emph{ACM Trans. Math. Softw. TOMS}}
  \bibinfo{volume}{25}, \bibinfo{number}{2} (\bibinfo{year}{1999}),
  \bibinfo{pages}{251--276}.
\newblock
\urldef\tempurl%
\url{https://doi.org/10/fncfxj}
\showDOI{\tempurl}


\bibitem[Xu et~al\mbox{.}(2018)]%
        {XuBurrYap_2018}
\bibfield{author}{\bibinfo{person}{J.~Xu}, \bibinfo{person}{M.~Burr}, {and}
  \bibinfo{person}{C.~Yap}.} \bibinfo{year}{2018}\natexlab{}.
\newblock \showarticletitle{An Approach for Certifying Homotopy Continuation
  Paths: {{Univariate}} Case}. In \bibinfo{booktitle}{\emph{Proc. {{ISSAC}}
  2018}}. \bibinfo{publisher}{ACM},
  \bibinfo{pages}{399--406}.
\newblock
\showISBNx{978-1-4503-5550-6}
\urldef\tempurl%
\url{https://doi.org/10/ggck7k}
\showDOI{\tempurl}


\end{thebibliography}

%%% -*-BibTeX-*-
%%% Do NOT edit. File created by BibTeX with style
%%% ACM-Reference-Format-Journals [18-Jan-2012].

\end{document}